\newtheorem{theorem}{Theorem}[section]
\newtheorem{proposition}[theorem]{Proposition}
\newtheorem{corollary}[theorem]{Corollary}
\newtheorem{lemma}[theorem]{Lemma}
\theoremstyle{definition}
\newtheorem{definition}[theorem]{Definition}
\newtheorem{example}[theorem]{Example}
\newtheorem{remark}[theorem]{Remark}
\newcommand{\set}[2]{\left\{ #1 \;\middle|\; #2 \right\}} 
\newcommand{\ssm}{\smallsetminus} 
\newcommand{\symdif}{\triangle} 
\newcommand{\eqdef}{\mbox{\,\raisebox{0.2ex}{\scriptsize\ensuremath{\mathrm:}}\ensuremath{=}\,}} 
\newcommand{\R}{\mathbb{R}} 
\newcommand{\Z}{\mathbb{Z}} 
\newcommand{\cA}{\mathcal A} 
\newcommand{\fS}{\mathfrak S} 
\newcommand{\sqc}{{\mathrm c}} 
\newcommand{\Q}{{\mathrm Q}} 
\newcommand{\sqP}{{\mathrm P}} 
\newcommand{\tQ}{\widetilde{\mathrm Q}} 
\newcommand{\tsqq}{\widetilde{\mathrm q}} 
\newcommand{\tI}{\widetilde{I}} 
\newcommand{\ti}{\tilde{i}} 
\newcommand{\tj}{\tilde{j}} 
\newcommand{\Qc}{\Q_\sqc} 
\newcommandx{\cdeg}[3][1=\alpha, 2=\beta, 3={}]{(#1 \, \|_{#3} \, #2)} 
\newcommandx{\vcoeff}[3][1=\alpha, 2=\beta, 3={}]{\{#1 \, \|_{#3} \, #2\}} 
\newcommandx{\cross}[2][1=\theta, 2=\delta]{[\,#1 \, \| \, #2 \,]} 
\newcommand{\dvector}{\mathbf{d}} 
\newcommand{\rootsToVar}{\phi} 
\newcommand{\rootsToVarc}{\rootsToVar_c} 
\newcommand{\posToVar}{\psi} 
\newcommand{\posToVarc}{\posToVar_\sqc} 
\newcommand{\posToRoots}{\vartheta} 
\newcommand{\posToRootsc}{\posToRoots_\sqc} 
\newcommand{\posToDiag}{\zeta} 
\newcommandx{\posToDiagc}[1][1=]{\posToDiag_{\sqc}^{#1}} 
\newcommand{\orbToVar}{\Psi} 
\newcommand{\diagToVar}{\chi} 
\newcommand{\tausqc}{\tau_\sqc} 
\newcommand{\tauc}{\tau_c} 
\newcommand{\weightedQuiverc}{\mathcal{Q}_c} 
\newcommand{\subwordComplex}[1][\Q,\pi]{\mathcal{SC}(#1)} 
\newcommand{\wordprod}[2]{\sigma^{#1}_{#2}} 
\newcommand{\Root}[2]{\mathsf{r}(#1,#2)} 
\newcommand{\Roots}[1]{\mathsf{R}(#1)} 
\newcommand{\wo}{w_\circ} 
\newcommand{\woc}{{\mathrm w}_\circ(\sqc)} 
\newcommand{\cwoc}{{\sqc\woc}} 
\newcommand{\translation}{\tau} 
\newcommand{\diagD}[2]{\ifthenelse{\equal{#2}{0}}{#1^{\textsc l}}{#1^{\textsc r}}} 
\newcommand{\bigfrac}[2]{$\displaystyle{\frac{#1}{#2}}$} 
\definecolor{Gray}{gray}{0.9} 
\newcolumntype{g}{>{\columncolor{Gray}}c}
\newcolumntype{x}{@{\hspace{.1cm}}c@{\hspace{.1cm}}}
\newcommand{\ie}{\textit{i.e.}~} 
\newcommand{\eg}{\textit{e.g.}~} 
\newcommand{\fref}[1]{Figure~\ref{#1}} 
\definecolor{darkblue}{rgb}{0,0,0.7} 
\newcommand{\darkblue}{\color{darkblue}} 
\newcommand{\defn}[1]{\emph{\darkblue #1}} 
\numberwithin{equation}{section}
\title[Denominator vectors in cluster algebras of finite type]{Denominator vectors and compatibility degrees in cluster algebras of finite type}
\author[C.~Ceballos]{Cesar Ceballos$^{\star}$} 
\address[C.~Ceballos]{Department of Mathematics and Statistics, York University, Toronto, Ontario M3J 1P3, CANADA}
\email{ceballos@mathstat.yorku.ca}
\urladdr{http://garsia.math.yorku.ca/~ceballos/}
\thanks{$^\star$CC was supported by DFG via the Research Training Group ``Methods for Discrete Structures" and the Berlin Mathematical School.}
\author[V.~Pilaud]{Vincent Pilaud$^{\ddagger}$} 
\address[V.~Pilaud]{CNRS \& LIX, \'Ecole Polytechnique, Palaiseau}
\email{vincent.pilaud@lix.polytechnique.fr}
\urladdr{http://www.lix.polytechnique.fr/~pilaud/}
\thanks{$^\ddagger$VP was supported by the spanish MICINN grant MTM2011-22792, by the French ANR grant EGOS 12 JS02 002 01, and by the European Research Project ExploreMaps~(ERC~StG~208471).}
\keywords{Cluster algebras, subword complexes, denominator vectors, compatibility degrees}
\subjclass[2010]{Primary: 13F60; Secondary 20F55, 05E15, 05E45}
\begin{document}

\maketitle

\begin{abstract}
We present two simple descriptions of the denominator vectors of the cluster variables of a cluster algebra of finite type, with respect to any initial cluster seed: one in terms of the compatibility degrees between almost positive roots defined by S.~Fomin and A.~Zelevinsky, and the other in terms of the root function of a certain subword complex. These descriptions only rely on linear algebra. They provide two simple proofs of the known fact that the $d$-vector of any non-initial cluster variable with respect to any initial cluster  seed has non-negative entries and is different from zero. 
\end{abstract}


\section{Introduction}
\label{sec:introduction}

\defn{Cluster algebras} were introduced by S.~Fomin and A.~Zelevinsky in the series of papers~\cite{FominZelevinsky-ClusterAlgebrasI, FominZelevinsky-ClusterAlgebrasII, FominZelevinsky-ClusterAlgebrasIII, FominZelevinsky-ClusterAlgebrasIV}. They are commutative rings generated by a (possibly infinite) set of \defn{cluster variables}, which are grouped into overlapping \defn{clusters}. The clusters can be obtained from any \defn{initial cluster seed}~$X = \{x_1, \dots, x_n\}$ by a mutation process. Each mutation exchanges a single variable~$y$ to a new variable~$y'$ satisfying a relation of the form~${yy' = M_+ + M_-}$, where~$M_+$ and~$M_-$ are monomials in the variables involved in the current cluster and distinct from~$y$ and~$y'$. The precise content of these monomials~$M_+$ and~$M_-$ is controlled by a combinatorial object (a skew-symmetrizable matrix, or equivalently a weighted quiver~\cite{Keller}) which is attached to each cluster and is also transformed during the mutation. We refer to~\cite{FominZelevinsky-ClusterAlgebrasI} for the precise definition of these joint dynamics. In~\cite[Theorem~3.1]{FominZelevinsky-ClusterAlgebrasI}, S.~Fomin and A.~Zelevinsky proved that given any initial cluster seed $X = \{x_1, \dots, x_n\}$, the cluster variables obtained during this mutation process are \defn{Laurent polynomials} in the variables~$x_1, \dots, x_n$. That is to say, every non-initial cluster variable~$y$ can be written in the form
$$ y = \frac{F(x_1,\dots,x_n)}{x_1^{d_1} \cdots x_n^{d_n}} $$
where~$F(x_1,\dots,x_n)$ is a polynomial which is not divisible by any variable~$x_i$ for~${i \in [n]}$. This intriguing property is called \defn{Laurent Phenomenon} in cluster algebras~\cite{FominZelevinsky-ClusterAlgebrasI}. The \defn{denominator vector} (or \defn{$d$-vector} for short) of the cluster variable~$y$ with respect to the initial cluster seed~$X$ is the vector~$\dvector(X,y) \eqdef (d_1,\dots,d_n)$. The $d$-vector of the initial cluster variable $x_i$ is~$\dvector(X,x_i) \eqdef -e_i \eqdef (0,\dots,-1,\dots,0)$ by definition.

Note that we think of the cluster variables as a set of variables satisfying some algebraic relations. These variables can be expressed in terms of the variables in any initial cluster seed~$X = \{x_1,\dots,x_n\}$ of the cluster algebra. Starting from a different cluster seed~$X' = \{x'_1, \dots, x'_n\}$ would give rise to an isomorphic cluster algebra, expressed in terms of the variables~$x'_1, \dots, x'_n$ of this seed. Therefore, the $d$-vectors of the cluster variables depend on the choice of the initial cluster seed~$X$ in which the Laurent polynomials are expressed. This dependence is explicit in the notation~$\dvector(X,y)$. Note also that since the denominator vectors do not depend on coefficients, we restrict our attention to coefficient-free cluster algebras.

In this paper, we only consider finite type cluster algebras, \ie cluster algebras whose mutation graph is finite. They were classified in~\cite[Theorem~1.4]{FominZelevinsky-ClusterAlgebrasII} using the Cartan-Killing classification for finite crystallographic root systems. In~\cite[Theorem~1.9]{FominZelevinsky-ClusterAlgebrasII}, S.~Fomin and A.~Zelevinsky proved that in the cluster algebra of any given finite type, with a bipartite quiver as initial cluster seed,
\begin{enumerate}[(i)]
\item there is a bijection~$\rootsToVar$ from almost positive roots to cluster variables, which sends the negative simple roots to the initial cluster variables;
\item the $d$-vector of the cluster variable~$\rootsToVar(\beta)$ corresponding to an almost positive root~$\beta$ is given by the vector~$(b_1, \dots, b_n)$ of coefficients of the root~$\beta = \sum b_i\alpha_i$ on the linear basis~$\Delta$ formed by the simple roots~$\alpha_1,\dots,\alpha_n$; and
\item these coefficients coincide with the \defn{compatibility degrees}~$\cdeg[\alpha_i][\beta]$ defined in \cite[Section~3.1]{FominZelevinsky-YSystems}.
\end{enumerate}

These results were extended to all cluster seeds corresponding to Coxeter elements of the Coxeter group (see \eg~\cite[Theorem~3.1 and Section~3.3]{Keller}). More precisely, assume that the initial seed is the cluster~$X_c$ corresponding to a Coxeter element~$c$ (its associated quiver is the Coxeter graph oriented according to~$c$). Then one can define a bijection~$\rootsToVarc$ from almost positive roots to cluster variables such that the $d$-vector of the cluster variable~$\rootsToVarc(\beta)$ corresponding to~$\beta$, with respect to the initial cluster seed~$X_c$, is still given by the vector~$(b_1, \dots, b_n)$ of coordinates of~$\beta = \sum b_i\alpha_i$ in the basis~$\Delta$ of simple roots. Under this bijection, the collections of almost positive roots corresponding to clusters are called \defn{$c$-clusters} and were studied by N.~Reading~\cite[Section~7]{Reading-CoxeterSortable}.

In this paper, we provide similar interpretations for the denominators of the cluster variables of any finite type cluster algebra with respect to \defn{any initial cluster seed} (acyclic or not):
\begin{enumerate}[(i)]
\item Our first description (Corollary~\ref{thm:d_vectors-compatibilty}) uses compatibility degrees: if~$\{\beta_1,\dots,\beta_n\}$ is the set of almost positive roots corresponding to the cluster variables in any initial cluster seed~$X = \{\rootsToVar(\beta_1),\dots,\rootsToVar(\beta_n)\}$, then the $d$-vector of the cluster variable~$\rootsToVar(\beta)$ corresponding to an almost positive root~$\beta$, with respect to the initial cluster seed~$X$, is still given by the vector of compatibility degrees~$(\cdeg[\beta_1][\beta], \dots, \cdeg[\beta_n][\beta])$ of~\cite[Section~3.1]{FominZelevinsky-YSystems}. We also provide a refinement of this result parametrized by a Coxeter element~$c$, using the bijection~$\rootsToVarc$ together with the notion of $c$-compatibility degrees (Corollary~\ref{thm:d_vectors-c_compatibilty}).
\item Our second description (Corollary~\ref{thm:d-vector}) uses the recent connection~\cite{CeballosLabbeStump} between the theory of cluster algebras of finite type and the theory of subword complexes, initiated by A.~Knutson and E.~Miller~\cite{KnutsonMiller-subwordComplex}. We describe the entries of the $d$-vector in terms of certain coefficients given by the root function of a subword complex associated to a certain word.
\end{enumerate}

Using these results, we provide two alternative proofs of the known fact that, in a cluster algebra of finite type, the $d$-vector of any non-initial cluster variable with respect to any initial cluster seed is non-negative and not equal to zero (Corollary~\ref{coro:positive}).

Even if we restrict here to crystallographic finite types since we deal with cluster variables of the associated cluster algebras, all the results not involving cluster variables remain valid for any arbitrary finite type. This includes in particular the results about almost positive roots, $c$-clusters, $c$-compatibility degrees, rotation maps, and their counterparts in subword complexes.
We also highlight that subword complexes played a fundamental role in the results of this paper. Even if the main result describing denominator vectors in terms of compatibility degrees can be proved independently, we would not have been able to find it without using the subword complex approach. 

Finally, we also provide explicit geometric interpretations of denominator vectors for the classical types~$A$,~$B$,~$C$ and~$D$ in Section~\ref{sec:geom-interpretations}. Our description of type~$D$ cluster algebras is new and will be explored further in a forthcoming paper.

\section{Preliminaries}
\label{sec:preliminaries}

Let~$(W,S)$ be a finite crystallographic Coxeter system of rank~$n$. We consider a root system~$\Phi$, with simple roots~$\Delta \eqdef \{\alpha_1,\dots,\alpha_n\}$, positive roots~$\Phi^+$, and almost positive roots~$\Phi_{\ge-1} \eqdef \Phi^+ \cup -\Delta$. We refer to~\cite{Humphreys} for a reference on Coxeter groups and root systems.

Let~$\cA(W)$ denote the cluster algebra associated to type~$W$, as defined in~\cite{FominZelevinsky-ClusterAlgebrasII}. Each cluster is formed by~$n$ cluster variables, and is endowed with a weighted quiver (an oriented and weighted graph on~$S$) which controls the cluster dynamics. Since we will not make extensive  use of it, we believe that it is unnecessary to recall here the precise definition of the quiver and cluster dynamics, and we refer to~\cite{FominZelevinsky-ClusterAlgebrasI, Keller} for details.

Let~$c$ be a Coxeter element of~$W$, and~$\sqc \eqdef (c_1, \cdots, c_n)$ be a reduced expression of~$c$. The element~$c$ defines a particular weighted quiver~$\weightedQuiverc$: the Coxeter graph of the Coxeter system~$(W,S)$ directed according to the order of appearance of the simple reflections in~$c$. We denote by~$X_c$ the cluster seed whose associated quiver is~$\weightedQuiverc$. Let $\woc \eqdef (w_1, \cdots, w_N)$ denote the \defn{$\sqc$-sorting word} for~$\wo$, \ie the lexicographically first subword of the infinite word~$\sqc^{\infty}$ which represents a reduced expression for the longest element $\wo \in W$. We consider the word~$\Qc \eqdef \cwoc$ and denote by ${m \eqdef n+N}$ the length of this word.


\subsection{Cluster variables, almost positive roots, and positions in the word~$\Qc$}
\label{subsec:bijections}

We recall here the above-mentioned bijections between cluster variables, almost positive roots and positions in the word~$\Qc$. We will see in the next sections that both the clusters and the $d$-vectors (expressed on any initial cluster seed~$X$) can also be read off in these different contexts. \fref{fig:bijections} summarizes these different notions and the corresponding notations. We insist that the choice of the Coxeter element~$c$ and the choice of the initial cluster~$X$ are not related. The former provides a labeling of the cluster variables by the almost positive roots or by the positions in~$\Qc$, while the latter gives an algebraic basis to express the cluster variables and to assign them $d$-vectors.

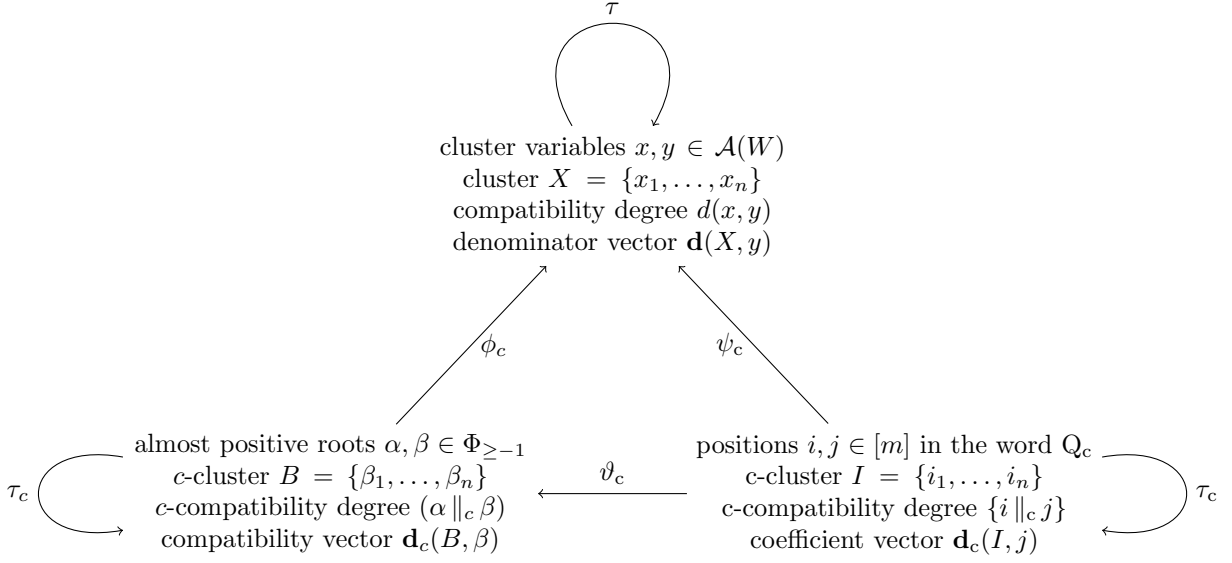
\begin{figure}
	\hspace*{-1.9cm}
	\begin{tikzpicture}[block_center/.style ={text width=15em, text centered}]
		\matrix [row sep=3em, column sep=-5em]
			{ & \node [block_center] (cluster) {cluster variables $x, y \in \cA(W)$ \\ cluster $X =\{x_1,\dots,x_n\}$ \\ compatibility degree $d(x,y)$ \\ denominator vector $\dvector(X,y)$}; \\ \\
			  \node [block_center] (roots) {almost positive roots~$\alpha, \beta \in \Phi_{\ge -1}$ \\ $c$-cluster $B = \{\beta_1, \dots, \beta_n\}$ \\ $c$-compatibility degree $\cdeg[\alpha][\beta][c]$ \\ compatibility vector $\dvector_c(B,\beta)$}; & & \node [block_center] (subword) {positions $i,j \in [m]$ in the word~$\Qc$ \\ $\sqc$-cluster $I = \{i_1, \dots, i_n\}$ \\ $\sqc$-compatibility degree $\vcoeff[i][j][\sqc]$ \\ coefficient vector $\dvector_{\sqc}(I,j)$}; \\
			};
		\path[->, font=\normalsize] (roots) edge node [right] {$\rootsToVarc$} (cluster);
		\path[->, font=\normalsize] (subword) edge node [left] {$\posToVarc$} (cluster);
		\path[->, font=\normalsize] (subword) edge node [above] {$\posToRootsc$} (roots);
		\path[->, font=\normalsize] (cluster) edge [loop above, in=60, out=120, looseness=5] node [above] {$\tau$} (cluster);
		\path[->, font=\normalsize] (roots) edge [loop below, in=190, out=170, looseness=4] node [left] {$\tauc$} (roots);
		\path[->, font=\normalsize] (subword) edge [loop below, in=350, out=10, looseness=4] node [right] {$\tausqc$} (subword);
	\end{tikzpicture}
	\caption{Three different contexts for cluster algebras of finite type, their different notions of compatibility degrees, and the bijections between them. See Sections~\ref{subsec:bijections}, \ref{subsec:rotation} and~\ref{subsec:compatibilityDegrees} for definitions.}
	\label{fig:bijections}
\end{figure}

First, there is a natural bijection between cluster variables and almost positive roots, which can be parametrized by the Coxeter element~$c$. Start from the initial cluster seed~$X_c$ associated to the weighted quiver~$\weightedQuiverc$ corresponding to the Coxeter element~$c$. Then the $d$-vectors of the cluster variables of~$\cA(W)$ with respect to the initial seed~$X_c$ are given by the almost positive roots~$\Phi_{\ge-1}$. This defines a bijection
$$\rootsToVarc : \Phi_{\ge-1} \longrightarrow \{\text{cluster variables of } \cA(W)\}$$
from almost positive roots to cluster variables. Notice that this bijection depends on the choice of the Coxeter element~$c$. When~$c$ is a bipartite Coxeter element, it is the bijection~$\rootsToVar$ of S.~Fomin and A.~Zelevinsky~\cite[Theorem~1.9]{FominZelevinsky-ClusterAlgebrasII} mentioned above. Transporting the structure of the cluster algebra~$\cA(W)$ through the bijection~$\rootsToVarc$, we say that a subset~$B$ of almost positive roots forms a \defn{$c$-cluster} iff the corresponding subset of cluster variables~$\rootsToVarc(B)$ forms a cluster of~$\cA(W)$. The collection of $c$-clusters forms a simplicial complex on the set~$\Phi_{\ge-1}$ of almost positive roots called the \defn{$c$-cluster complex}. This complex was described in purely combinatorial terms by N.~Reading in~\cite[Section~7]{Reading-CoxeterSortable}. Given an initial $c$-cluster seed~$B \eqdef \{\beta_1,\dots,\beta_n\}$ in~$\Phi_{\ge-1}$ and an almost positive root~$\beta$, we define the $d$-vector of~$\beta$ with respect to~$B$ as
$$\dvector_c(B,\beta) \eqdef \dvector \big( \rootsToVarc(B), \rootsToVarc(\beta) \big).$$
If~$c$ is a bipartite Coxeter element, then we speak about classical clusters and omit~$c$ in the previous notation to write~$\dvector(B,\beta)$.

Second, there is a bijection
$$\posToRootsc : [m] \longrightarrow \Phi_{\ge-1}$$
from the positions in the word $\Qc = \cwoc$ to the almost positive roots as follows. The letter~$c_i$ of~$\sqc$ is sent to the negative root~$-\alpha_{c_i}$, while the letter~$w_i$ of~$\woc$ is sent to the positive root~$w_{1} \cdots w_{i-1}(\alpha_{w_i})$. To be precise, note that this bijection depends not only on the Coxeter element~$c$, but also on its reduced expression~$\sqc$. This bijection was defined by C.~Ceballos, \mbox{J.-P.~Labb\'e} and C.~Stump in~\cite[Theorem~2.2]{CeballosLabbeStump}.

Composing the two maps described above provides a bijection
$$\posToVarc : [m] \longrightarrow \{\text{cluster variables of } \cA(W)\}$$
from positions in the word~$\Qc$ to cluster variables (precisely defined by $\posToVarc \eqdef \rootsToVarc \circ \posToRootsc$). Transporting the structure of~$\cA(W)$ through the bijection~$\posToVarc$, we say that a subset~$I$ of positions in~$\Qc$ forms a \defn{$\sqc$-cluster} iff the corresponding cluster variables~$\posToVarc(I)$ form a cluster of~$\cA(W)$. Moreover, given an initial $\sqc$-cluster seed~$I \subseteq [m]$ in~$\Qc$ and a position~$j \in [m]$ in $\Qc$, we define the $d$-vector of~$j$ with respect to~$I$ as
$$\dvector_\sqc(I,j) \eqdef \dvector \big( \posToVarc(I), \posToVarc(j) \big).$$
It turns out that the $\sqc$-clusters can be read off directly in the word~$\Qc$ as follows.

\begin{theorem}[\protect{\cite[Theorem~2.2 and Corollary~2.3]{CeballosLabbeStump}}]
\label{thm:CLS_cluster_complexes}
A subset~$I$ of positions in~$\Qc$ forms a $\sqc$-cluster in~$\Qc$ if and only if the subword of~$\Qc$ formed by the complement of~$I$ is a reduced expression for~$\wo$.
\end{theorem}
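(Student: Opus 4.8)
The plan is to identify the $\sqc$-cluster complex on the ground set~$[m]$ with the subword complex~$\subwordComplex[\Qc,\wo]$ of Knutson and Miller, whose faces are by definition the subsets~$I \subseteq [m]$ whose complement~$\Qc \ssm I$ contains a reduced expression for~$\wo$, and whose facets are those~$I$ for which~$\Qc \ssm I$ \emph{is} such a reduced expression. Since $\sqc$-clusters are obtained by transporting the actual clusters of~$\cA(W)$ through~$\posToVarc = \rootsToVarc \circ \posToRootsc$, the statement reduces to checking that these two pure simplicial complexes on~$[m]$ have the same facets.

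First I would record that both complexes are pure of dimension~$n-1$. As~$\ell(\wo) = N$ while~$\Qc$ has length~$m = n+N$, a reduced subword for~$\wo$ occupies exactly~$N$ of the~$m$ positions, so every facet of~$\subwordComplex[\Qc,\wo]$ has cardinality~$n$; on the cluster side each cluster of~$\cA(W)$ consists of~$n$ variables, so the $\sqc$-cluster complex is likewise pure of dimension~$n-1$. I would then anchor the correspondence at the initial seed: deleting the length-$n$ prefix~$\sqc$ from~$\Qc = \cwoc$ leaves precisely the $\sqc$-sorting word~$\woc$, which is a reduced expression for~$\wo$ by construction, so the initial $\sqc$-cluster~$\{1,\dots,n\}$ attached to~$X_c$ (its positions are sent to negative simple roots by~$\posToRootsc$, hence to initial cluster variables by~$\rootsToVarc$) is a facet of~$\subwordComplex[\Qc,\wo]$.

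The core of the argument is to match the exchange structures. One checks first that the Demazure product of~$\Qc$ equals~$\wo$, so that~$\subwordComplex[\Qc,\wo]$ is a sphere; consequently, for any facet~$I$ and any~$i \in I$ there is a unique position~$i' \in [m]$ with~$(I \ssm \{i\}) \cup \{i'\}$ again a facet, and this flip is computed explicitly from the root function~$\Root{I}{\cdot}$. This is to be compared with cluster mutation, which also replaces a single element of a cluster by a unique other one. The bridge is the bijection~$\posToRootsc$: one verifies that the root~$\Root{I}{i}$ carried by a position of a facet agrees, under~$\posToRootsc$, with the almost positive root labeling the corresponding cluster variable, and that the subword flip then realizes the rotation~$\tauc$ on~$\Phi_{\ge-1}$. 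Since the mutation graph of~$\cA(W)$ and the flip graph of the sphere~$\subwordComplex[\Qc,\wo]$ are both connected, propagating the identification outward from the anchor facet~$\{1,\dots,n\}$ forces the two complexes to coincide.

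The main obstacle is exactly this flip-versus-mutation step: showing that the purely combinatorial flip on reduced subwords of~$\cwoc$, read off from the root function, reproduces cluster mutation. Here Reading's combinatorial description of the $c$-cluster complex in terms of $c$-sortable elements enters, together with the defining greedy property of the $\sqc$-sorting word~$\woc$. The technical heart is to prove that a greedy flip on subwords of~$\Qc$ corresponds to the rotation~$\tauc$, and hence that the reduced-subword facets of~$\subwordComplex[\Qc,\wo]$ are precisely the $c$-clusters; the remaining verifications on cardinalities, connectivity, and the anchoring seed are then routine.
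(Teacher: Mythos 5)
The paper itself contains no proof of Theorem~\ref{thm:CLS_cluster_complexes} --- it is imported verbatim from \cite{CeballosLabbeStump}, so your attempt must be measured against the proof given there, which runs through N.~Reading's recursive characterization of $c$-clusters \cite{Reading-CoxeterSortable} rather than through flip propagation. Your skeleton (anchor at the common facet $[n]$, match the unique exchanges on both sides, conclude by connectivity of the two flip graphs) breaks at exactly the step you call the bridge, in two ways. First, the claim that the root $\Root{I}{i}$ carried by a position of a facet agrees under $\posToRootsc$ with the root labelling the corresponding cluster variable is false for general facets: it holds only for the initial facet $I = [n]$ (this is the computation opening the proof of Proposition~\ref{coro:compatibility-coef}). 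Already in type~$A_3$ with $I = \{1,5,7\}$, Table~\ref{table:typeA} gives $\Root{I}{5} = \alpha_2$ while $\posToRootsc(5) = \alpha_3$, different even up to sign. Second, you conflate flips with the rotation: $\tauc$ is not realized by any flip; it is induced by the global jumping operation $\sqc \mapsto \sqc' = (c_2,\dots,c_n,s)$, which relabels all positions of the word at once (Lemmas~\ref{lem:rotation1} and~\ref{lem:cluster-jumping}, Corollary~\ref{coro:rotation}), whereas a flip exchanges a single $i \in I$ with the unique $j \notin I$ satisfying $\Root{I}{j} = \pm\Root{I}{i}$ (Lemma~\ref{lem:cluster_flip}). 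Worse, the propagation itself is circular: to know that the subword flip at $i$ matches the mutation of $\posToVarc(i)$, you must already know that the entering position $j$ is labelled by the entering cluster variable --- but $\posToVarc = \rootsToVarc \circ \posToRootsc$ is defined through $d$-vectors with respect to~$X_c$, so this amounts to controlling denominator vectors along arbitrary mutation sequences, which is not ``routine'': it is essentially the subject of the present paper.

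The actual proof in \cite{CeballosLabbeStump} avoids all of this by verifying that the facets of $\subwordComplex[\Qc,\wo]$ satisfy the two conditions which, by \cite{Reading-CoxeterSortable}, uniquely determine the $c$-cluster complex: (i) compatibility with the change of Coxeter element $c \mapsto scs$ for $s$ initial in $c$, the jumping map $\sigma$ intertwining $\posToRootsc$ with Reading's rotation of almost positive roots --- these are the ingredients restated in this paper as Lemmas~\ref{lem:rotation1} and~\ref{lem:cluster-jumping}; and, crucially, (ii) the initial condition: for $i \in [n]$, positions $i$ and $j$ lie in a common facet if and only if $\posToRootsc(j) \in (\Phi_{\langle c_i \rangle})_{\ge -1}$, \ie lies in the almost positive roots of the parabolic subsystem avoiding $\alpha_{c_i}$ (this is \cite[Theorem~5.1]{CeballosLabbeStump}, quoted in Section~\ref{sec:proof-coro-positive}); condition (ii) is what enables the induction on rank. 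Your last paragraph gestures at Reading's description but never engages this parabolic initial condition, and rotation-invariance alone cannot pin down the complex, so the missing idea is precisely (ii). (You would additionally need the known identification of $c$-clusters with clusters of $\cA(W)$, taken as background in Section~\ref{subsec:bijections}, to pass from Reading's complex to $\sqc$-clusters as defined here.) Your peripheral observations --- purity, $\ell(\wo) = N$ so facets have size $n$, sphericity via the Demazure product, and the anchor facet $[n]$ --- are all correct but do not carry the argument.
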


\begin{remark}
The previous theorem relates $c$-cluster complexes to subword complexes as defined by A.~Knutson and E.~Miller~\cite{KnutsonMiller-subwordComplex}. Given a word~$\Q$ on the generators~$S$ of~$W$ and an element~$\pi \in W$, the \defn{subword complex}~$\subwordComplex$ is the simplicial complex whose faces are subwords~$\sqP$ of~$\Q$ such that the complement~$\Q \ssm \sqP$ contains a reduced expression of~$\pi$. See~\cite{CeballosLabbeStump} for more details on this connection.
\end{remark}


\subsection{The rotation map}
\label{subsec:rotation}

In this section we introduce a rotation map $\tausqc$ on the positions in the word~$\Qc$, and naturally extend it to a map on almost positive roots and cluster variables using the bijections of Section~\ref{subsec:bijections} (see \fref{fig:bijections}). The rotation map plays the same role for arbitrary finite type as the rotation of the polygons associated to the classical types $A$, $B$, $C$ and~$D$, see~e.g.~\cite[Theorem~8.10]{CeballosLabbeStump}.

\begin{definition}[Rotation maps]
\label{def:rotation_map}
The rotation
$$\tausqc: [m] \longrightarrow [m]$$
is the map on the positions in the word $\Qc$ defined as follows. If $q_i=s$, then $\tausqc(i)$ is defined as the position in $\Qc$ of the next occurrence of $s$ if possible, and as the first occurrence of $\wo s \wo$ otherwise.

Using the bijection $\posToRootsc$ from the positions in the word $\Qc$ to almost positive roots, this rotation can also be regarded as a map from almost positive roots to almost positive roots. For simplicity, we abuse of notation and also write
$$\tauc: \Phi_{\geq-1} \longrightarrow \Phi_{\geq-1}$$
for the composition $\posToRootsc  \circ  \tausqc  \circ  \posToRootsc^{-1}$. This composition can be expressed purely in terms of roots as 
$$ 
\tauc (\alpha) =
\begin{cases}
	c_1\cdots c_{i-1} ( \alpha_{c_i}) &\text{ if } \alpha = -\alpha_{c_i}, \\
	-\alpha_{c_i} &\text{ if } \alpha = c_n\cdots c_{i+1} (\alpha_{c_i}), \\
	c (\alpha) &\text{ otherwise.} \\
\end{cases}
$$

\noindent
The first and the third lines of this equation are given by the root corresponding to the next occurrence of the letter associated to $\alpha$ in $\Qc$. The second line corresponds to the case when the letter associated to $\alpha$ in $\Qc$ is the last occurrence of this letter in $\Qc$. This case can be easily explained as follows. Let~$\eta :S\rightarrow S$ be the involution~$\eta(s)=\wo s \wo$. 
The last occurrence of $\eta(c_i)$ in $\Qc$ is the position which is mapped under $\tauc$ to the first occurrence of $c_i$ in $\Qc$. In other words, if we denote by $\alpha$ the root associated to the last occurrence of $\eta(c_i)$ in $\Qc$, then $\tauc(\alpha)=-\alpha_{c_i}$. In addition, the word $\woc$ is, up to commutations, equal to a word with suffix $(\eta(c_1),\dots,\eta(c_n))$~\cite[Proposition~7.1]{CeballosLabbeStump}. From this we conclude that~$\alpha=c_n\cdots c_{i+1} (\alpha_{c_i})$ as desired.

Using the bijection $\posToVarc$ from the positions in the word $\Qc$ to cluster variables, the rotation can also be regarded as a map on the set of cluster variables. Again for simplicity, we also write
$$\tau: \{\text{cluster variables of } \cA(W)\} \longrightarrow \{\text{cluster variables of } \cA(W)\}$$
for the composition~$\posToVarc \circ \tausqc \circ \posToVarc^{-1}$. This composition can be expressed purely in terms of cluster variables as follows. Consider the cluster variables expressed in terms of the initial cluster seed~$X_c$ associated to the weighted quiver~$\weightedQuiverc$ (recall that this quiver is by definition the Coxeter graph of the Coxeter system~$(W,S)$ directed according to the order of appearance of the simple reflections in~$c$). If~$y$ is the cluster variable at vertex~$i$ of a quiver obtained from~$\weightedQuiverc$ after a sequence of mutations~$\mu_{i_1}\rightarrow \dots \rightarrow \mu_{i_r}$, then the rotation~$\tau(y)$ is the cluster variable at vertex~$i$ of the quiver obtained from~$\weightedQuiverc$ after the sequence of mutations~$\mu_{c_1}\rightarrow \dots \rightarrow \mu_{c_n} \rightarrow \mu_{i_1}\rightarrow \dots \rightarrow \mu_{i_r}$. Although defined using a Coxeter element~$c$, this rotation map is independent of the choice of~$c$ and we denote it by~$\tau$.
\end{definition}

The following lemma is implicit in~\cite[Proposition~8.6]{CeballosLabbeStump}, see Corollary~\ref{coro:rotation}.

\begin{lemma}
The rotation map preserves clusters:
\begin{enumerate}[(i)]
\item a subset~$I\subset [m]$ of positions in the word~$\Qc$ is a $\sqc$-cluster if and only if~$\tausqc(I)$ is a $\sqc$-cluster;
\item a subset $B\subset \Phi_{\geq -1}$ of almost positive roots is a $c$-cluster if and only if~$\tauc(B)$ is a $c$-cluster; and 
\item a subset $X$ of cluster variables is a cluster if and only if~$\tau(X)$ is a cluster.
\end{enumerate}
\end{lemma}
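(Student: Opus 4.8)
The plan is to prove the three statements as a single chain of equivalences, exploiting the fact that the bijections $\posToVarc$ and $\rootsToVarc$ of Section~\ref{subsec:bijections} carry the notion of cluster \emph{by definition}. Indeed, $B$ is a $c$-cluster iff $\rootsToVarc(B)$ is a cluster, and $I$ is a $\sqc$-cluster iff $\posToVarc(I)=\rootsToVarc(\posToRootsc(I))$ is a cluster; moreover the rotation on roots and on variables is defined precisely as the conjugate of $\tausqc$ by these bijections, namely $\tauc=\posToRootsc\circ\tausqc\circ\posToRootsc^{-1}$ and $\tau=\posToVarc\circ\tausqc\circ\posToVarc^{-1}$. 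Consequently, statements (ii) and (iii) follow formally from statement~(i) by transport of structure: applying the bijection to an equivalence turns it into the corresponding equivalence for roots and for cluster variables. So the entire content of the lemma reduces to part~(i), a purely combinatorial statement about subwords of~$\Qc$.

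First I would reduce part~(i) to Theorem~\ref{thm:CLS_cluster_complexes}. By that theorem, $I\subset[m]$ is a $\sqc$-cluster if and only if the complement $\Qc\ssm I$ is a reduced expression for~$\wo$. Thus it suffices to show that $\Qc\ssm I$ is a reduced word for~$\wo$ if and only if $\Qc\ssm\tausqc(I)$ is a reduced word for~$\wo$. Since $\tausqc$ is a bijection on $[m]$, its action on complements is $\Qc\ssm\tausqc(I)$ versus $\Qc\ssm I$, and the claim becomes a statement that the rotation sends the complement of one reduced expression for~$\wo$ to the complement of another reduced expression for~$\wo$.

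The heart of the argument — and the step I expect to be the main obstacle — is the combinatorial analysis of what $\tausqc$ does to a subword that is a reduced expression for~$\wo$. The cleanest route is to invoke the result cited as implicit in~\cite[Proposition~8.6]{CeballosLabbeStump}, which concerns the flip/rotation structure of the subword complex $\subwordComplex[\Qc,\wo]$; as the excerpt notes, the statement we want will follow via Corollary~\ref{coro:rotation}. Concretely, one analyzes the rotation letter by letter: for each position~$i$ with $q_i=s$, the rotation moves to the next occurrence of~$s$, or, when $i$ is the last occurrence of~$s$, wraps around to the first occurrence of $\eta(s)=\wo s\wo$. Using the fact (recalled in Definition~\ref{def:rotation_map}, citing~\cite[Proposition~7.1]{CeballosLabbeStump}) that $\woc$ is, up to commutations, a word with suffix $(\eta(c_1),\dots,\eta(c_n))$, one checks that the shift in the prefix $\sqc$ and the wrap-around in $\woc$ interlock correctly so that the rotated complement remains a reduced expression for the same element~$\wo$. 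The delicate point is controlling the wrap-around: one must verify that removing the prefix letters and re-inserting the corresponding $\eta$-letters at the end preserves both reducedness and the product~$\wo$, which is exactly where the commutation property of the $\sqc$-sorting word is essential.

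Having established~(i), I would then state explicitly that~(ii) and~(iii) are immediate consequences. For~(ii): $B$ is a $c$-cluster iff $\posToRootsc^{-1}(B)$ is a $\sqc$-cluster (since $\posToRootsc$ is a bijection identifying the two complexes), iff $\tausqc\big(\posToRootsc^{-1}(B)\big)$ is a $\sqc$-cluster by~(i), iff $\posToRootsc\big(\tausqc(\posToRootsc^{-1}(B))\big)=\tauc(B)$ is a $c$-cluster. For~(iii) one argues identically with $\posToVarc$ in place of $\posToRootsc$ and $\tau$ in place of $\tauc$. No further computation is needed, since these are formal diagram-chases through the commuting triangle of \fref{fig:bijections}.
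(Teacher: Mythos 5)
Your proposal is correct and matches the paper's own treatment: parts (ii) and (iii) are formal transport of structure through the bijections~$\posToRootsc$ and~$\posToVarc$ (which is all the paper implicitly does for them), and part (i) is exactly Corollary~\ref{coro:rotation}, which the paper derives in Section~\ref{sec:mainproofpart2} by jumping the letters of~$\sqc$ one at a time (Lemmas~\ref{lem:rotation1} and~\ref{lem:cluster-jumping}) --- the same wrap-around mechanism you sketch. One small correction: the step that makes each jump preserve reduced expressions of~$\wo$ is the elementary identity $s\wo = \wo\,\eta(s)$ applied to the complement subword (this is the paper's short proof of Lemma~\ref{lem:cluster-jumping}, where the complement is either unchanged or becomes its own jumping word), whereas the suffix property of~$\woc$ from~\cite[Proposition~7.1]{CeballosLabbeStump}, which you call essential at this point, is used in the paper only to derive the explicit root formula for~$\tauc$ in Definition~\ref{def:rotation_map}.
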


We present a specific example for the rotation map on the positions in the word~$\Qc$, almost positive roots, and cluster variables below. For the computation of cluster variables in terms of a weighted quiver we refer the reader to~\cite{Keller}.

\begin{example}
Consider the Coxeter group~$A_2 = \fS_{3}$, generated by the simple transpositions~$s_1,s_2$ for~${s_i \eqdef (i \;\, i+1)}$, and the associated root system with simple roots $\alpha_1,\alpha_2$. 
Let~$c=s_1s_2$ be a Coxeter element and~$\Qc=(s_1,s_2,s_1,s_2,s_1)$ be the associated word.
The rotation map on the positions in the word~$\Qc$, almost positive roots, and cluster variables is given by
\renewcommand{\arraystretch}{2.1} 
\[
\begin{tabular}{c@{ }c@{ }c@{ }c @{\qquad} c@{}c@{ }c@{ }c @{\qquad} c@{\hspace{-.3cm}}c@{ }c@{ }c}
$\tausqc:$	& $[m]$ 							& $\longrightarrow$ 		& $[m]$							&
$\tauc:$		& $\Phi_{\geq-1}$				& $\longrightarrow$		& $\Phi_{\geq-1}$				&
$\tau:$		& $\{\text{var}\}$				& $\longrightarrow$		& $\{\text{var}\}$				\\
  			& $1$   							& $\longmapsto$ 			& $3	$							&
			& $-\alpha_1$ 					& $\longmapsto$			& $\alpha_1$						&
			& $x_1$							& $\longmapsto$ 			& \bigfrac{1+x_2}{x_1}			\\
			& $2	$							& $\longmapsto$			& $4	$							&
			& $-\alpha_2$					& $\longmapsto$			& $\alpha_1 + \alpha_2$			&       
			& $x_2$							& $\longmapsto$			& \bigfrac{1+x_1+x_2}{x_1x_2}		\\
			& $3	$							& $\longmapsto$			& $5	$							&
			& $\alpha_1$						& $\longmapsto$			& $\alpha_2$						&
			& \bigfrac{1+x_2}{x_1}			& $\longmapsto$			& \bigfrac{1+x_1}{x_2}			\\
			& $4	$							& $\longmapsto$			& $1	$							&
			& $\alpha_1 + \alpha_2$			& $\longmapsto$			& $-\alpha_1$					&
			& \bigfrac{1+x_1+x_2}{x_1x_2}  	& $\longmapsto$			& $x_1$							\\
			& $5	$							& $\longmapsto$			& $2	$							&
			& $\alpha_2$						& $\longmapsto$			& $-\alpha_2$					&
			& \bigfrac{1+x_1}{x_2}			& $\longmapsto$			& $x_2$  
\end{tabular}
\]
\renewcommand{\arraystretch}{1}
\end{example} 

\begin{remark}
Let~$c$ be a bipartite Coxeter element, with sources corresponding to the positive vertices~($+$) and sinks corresponding to the negative vertices~($-$). Then, the rotation~$\tauc$ on the set of almost positive roots is the product of the maps~${\tau_+, \tau_- : \Phi_{\ge-1} \to \Phi_{\ge-1}}$ defined in~\cite[Section~2.2]{FominZelevinsky-YSystems}. We refer the interested reader to that paper for the definitions of~$\tau_+$ and~$\tau_-$.
\end{remark}


\subsection{Three descriptions of~$c$-compatibility degrees}
\label{subsec:compatibilityDegrees}

In this section we introduce three notions of compatibility degrees on the set of cluster variables, almost positive roots, and positions in the word~$\Qc$. We will see in Section~\ref{sec:mainResults} that these three notions coincide under the bijections of Section~\ref{subsec:bijections}, and will use it to describe three different ways to compute $d$-vectors for cluster algebras of finite type. We refer again to \fref{fig:bijections} for a summary of our notations in these three situations.


\subsubsection{On cluster variables}
Let $X=\{x_1,\dots , x_n\}$ be a set of cluster variables of~$\cA(W)$ forming a cluster, and let 
\begin{equation}
\label{eq:cluster_variable_y}
y = \frac{F(x_1,\dots,x_n)}{x_1^{d_1} \cdots x_n^{d_n}}
\end{equation}
be a cluster variable of~$\cA(W)$ expressed in terms of the variables $\{x_1,\dots ,x_n\}$ such that $F(x_1,\dots ,x_n)$ is a polynomial which is not divisible by any variable $x_j$ for $j\in [n]$.
Recall that the $d$-vector of $y$ with respect to $X$ is $\dvector(X,y) = (d_1,\dots,d_n)$. 

\begin{lemma}\label{lem:comp-denominator}
For cluster algebras of finite type, the $i$-th component of the $d$-vector $\dvector(X,y)$ is independent of the cluster $X$ containing the cluster variable $x_i$.
\end{lemma}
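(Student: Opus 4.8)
The plan is to reduce the statement to a single mutation and then reinterpret the exponent of $x_i$ as an order of vanishing that is insensitive to that mutation. Fix the cluster variable $x_i$ and the cluster variable $y$, and for a cluster $X \ni x_i$ write $D(X)$ for the exponent of $x_i$ in the denominator of $y$ expressed in $X$, i.e.\ the $i$-th component of $\dvector(X,y)$. Since $\cA(W)$ is of finite type, the clusters containing $x_i$ are precisely the vertices of the facet of the generalized associahedron dual to $x_i$; the graph of a facet of a polytope is connected, and two adjacent vertices differ by a single mutation $\mu_k$ that does not involve $x_i$. Hence it suffices to prove that $D(X) = D(X')$ whenever $X' = \mu_k(X)$ for some cluster variable $x_k \neq x_i$.

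I would then phrase $D(X)$ as a valuation. In the ambient field of rational functions, let $v_X$ be the discrete valuation given by the order of vanishing along $\{x_i = 0\}$ in the coordinates $X$, so that $v_X(x_i) = 1$ and $v_X(x_j) = 0$ for every other member $x_j$ of $X$. By the Laurent phenomenon $y$ is a Laurent polynomial in $X$, and $D(X) = -v_X(y)$ is minus the lowest power of $x_i$ occurring. Defining $v_{X'}$ analogously, the goal becomes $v_X(y) = v_{X'}(y)$. The delicate point to keep in mind is that $v_X$ and $v_{X'}$ are \emph{a priori} different valuations, because the two coordinate charts are glued by the non-monomial exchange relation; so this equality is a genuine property of the cluster variable $y$, not a formal identity valid for arbitrary rational functions.

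The heart of the argument is the computation of $v_{X'}(x_k)$. The exchange relation reads $x_k x_k' = M_+ + M_-$ with $M_+ = \prod_{j \neq k} x_j^{[b_{jk}]_+}$ and $M_- = \prod_{j \neq k} x_j^{[-b_{jk}]_+}$, where $(b_{jk})$ is the exchange matrix of $X$. Since $x_k'$ and all $x_j$ with $j \neq i,k$ have $v_{X'}$-value $0$ while $v_{X'}(x_i) = 1$, we obtain
$$v_{X'}(x_k) = v_{X'}(M_+ + M_-) = \min\bigl([b_{ik}]_+,\,[-b_{ik}]_+\bigr) = 0,$$
the last equality because for any integer $m$ at most one of $[m]_+$ and $[-m]_+$ is positive. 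Thus $v_{X'}$ takes the same value as $v_X$ on every member of $X$: both give $1$ on $x_i$, $0$ on the other $x_j$, and now $0$ on $x_k$ as well. Concretely $M_+ + M_-$ does not vanish at the generic point of $\{x_i = 0\}$, so the transition $x_k = (M_+ + M_-)/x_k'$ is a unit there and the two charts are isomorphic in a neighborhood of that generic point; therefore $v_X = v_{X'}$ as valuations, and in particular $v_X(y) = v_{X'}(y)$, that is $D(X) = D(X')$.

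I expect the only real obstacle to be exactly this vanishing $\min([b_{ik}]_+,[-b_{ik}]_+) = 0$: it is what prevents the order of vanishing along $\{x_i = 0\}$ from changing under a mutation at $k \neq i$, and without it (for a generic rational function rather than a cluster variable) the two valuations genuinely differ. If one prefers to avoid the language of valuations, the same computation can be run by substituting $x_k = (M_+ + M_-)/x_k'$ into the Laurent expansion $y = \sum_{a,b} c_{a,b}\,x_i^{a} x_k^{b}$, after specializing the remaining common variables $x_j$ ($j \neq i,k$) to generic scalars, and checking that the lowest power of $x_i$ is unchanged; the Laurent phenomenon for $X'$ clears the denominators coming from negative powers of $x_k$, and the fact that $M_+ + M_-$ has a nonzero $x_i$-constant term is precisely what makes each contribution retain its lowest $x_i$-degree. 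Note that finite type enters only through the connectivity of the clusters containing $x_i$; the mutation step itself is valid in any cluster algebra.
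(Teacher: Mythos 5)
Your proof is correct, and although it shares the paper's skeleton---reduce, via connectivity of the set of clusters containing $x_i$ (which you justify more concretely than the paper does, through the facet of the generalized associahedron dual to $x_i$), to a single mutation $\mu_k$ with $x_k \neq x_i$---the single-mutation step runs on a genuinely different engine. The paper substitutes $x_k = (M_+ + M_-)/x_k'$ into the Laurent expansion of $y$, asserts that the resulting numerator $\widetilde P$ is coprime to $x_k'$ and to every $x_\ell$ with $\ell \neq k$, and then invokes the Laurent phenomenon in the mutated cluster to conclude that $P = M_+ + M_-$ divides $\widetilde P$, whence only the mutated coordinate of the $d$-vector can change. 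You instead identify the $i$-th denominator exponent with minus the divisorial valuation along $\{x_i = 0\}$ and show the two charts induce the \emph{same} valuation, the crux being $v_{X'}(M_+ + M_-) = \min\bigl([b_{ik}]_+, [-b_{ik}]_+\bigr) = 0$ (no cancellation is possible since both monomials have positive coefficients), so the transition map is an isomorphism near the generic point of the divisor. It is worth noting that your non-divisibility $x_i \nmid M_+ + M_-$ is precisely the fact the paper uses tacitly when asserting that $\widetilde P$ is not divisible by the $x_\ell$'s, so your route makes explicit a point the paper glosses over; moreover it confines the Laurent phenomenon to its definitional role (identifying $D(X') = -v_{X'}(y)$ requires $y$ to be Laurent in $X'$) rather than using it as the computational engine. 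In exchange, the paper's substitution buys slightly more information---it shows the entire $d$-vector changes in at most the mutated coordinate---while your argument is chart-free and, as you correctly observe, valid for the mutation step in an arbitrary cluster algebra, finite type entering only through connectivity. One small correction to your framing: since you in fact prove $v_X = v_{X'}$ as valuations on the whole ambient field, the equality $v_X(h) = v_{X'}(h)$ \emph{is} a formal identity for arbitrary rational functions $h$; the input specific to cluster variables is only that $y$ is Laurent in both charts, which is what lets the common valuation compute both denominator exponents.
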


In view of this lemma, which is proven in Section~\ref{subsec:proof2}, one can define a compatibility degree between two cluster variables as follows.

\begin{definition}[Compatibility degree on cluster variables]
\label{def:comp-var}
For any two cluster variables~$x$ and~$y$, we denote by~$d(x,y)$ the $x$-component of the $d$-vector~$\dvector(X,y)$ for any cluster~$X$ containing the variable~$x$. We refer to~$d(x,y)$ as the \defn{compatibility degree} of~$y$ with respect to~$x$.
\end{definition}

Observe that this compatibility degree is well defined for any pair of cluster variables~$x$ and~$y$, since any cluster variable~$x$ of~$\cA(W)$ is contained in at least one cluster~$X$ of~$\cA(W)$.


\subsubsection{On almost positive roots}

\begin{definition}[$c$-compatibility degree on almost positive roots]
\label{def:comp-roots}
The \defn{$c$-compatibility degree} on the set of almost positive roots is the unique function
$$
\begin{array}{ccc}
\Phi_{\geq -1} \times \Phi_{\geq -1}  & \longrightarrow  & \Z  \\
(\alpha,\beta)  & \longmapsto  & \cdeg[\alpha][\beta][c]  
\end{array}
$$
characterized by the following two properties:
\begin{eqnarray}
& \cdeg[-\alpha_i][\beta][c] = b_i, & \text{for all } i \in [n] \text{ and } \beta = \sum b_i \alpha_i \in \Phi_{\ge-1}, \label{compatibility-relation1} \\
& \cdeg[\alpha][\beta][c] = \cdeg[\tauc \alpha][ \tauc \beta][c],  & \text{for all } \alpha,\beta \in \Phi_{\ge-1}. \label{compatibility-relation2}
\end{eqnarray}
\end{definition}

\begin{remark}
\label{rem:cdeg}
This definition is motivated by the classical compatibility degree defined by S.~Fomin and A.~Zelevinsky in~\cite[Section~3.1]{FominZelevinsky-YSystems}. Namely, if $c$ is a bipartite Coxeter element, then the \mbox{$c$-compatibility} degree~$\cdeg[\cdot][\cdot][c]$ coincides with the compatibility degree~$\cdeg[\cdot][\cdot]$ of~\cite[Section~3.1]{FominZelevinsky-YSystems} except that~${\cdeg[\alpha][\alpha][c] = -1}$ while $\cdeg[\alpha][\alpha] = 0$ for any~$\alpha \in \Phi_{\ge-1}$. Throughout this paper, we ignore this difference: we still call \defn{classical compatibility degree}, and denote by~$\cdeg[\cdot][\cdot]$, the $c$-compatibility degree for a bipartite Coxeter element~$c$.
\end{remark}

\begin{remark}
In~\cite{MarshReinekeZelevinsky}, R.~Marsh, M.~Reineke, and A.~Zelevinsky defined the $c$-compatibility degree for simply-laced types in a representation theoretic way, and extended this definition for arbitrary finite type by ``folding'' techniques. In~\cite{Reading-CoxeterSortable}, N.~Reading also used the similar notion of $c$-compatibility between almost positive roots. Namely, $\alpha$ and $\beta$ are $c$-compatible when their compatibility degree vanishes. Here, we really need to know the value of the $c$-compatibility degree, and not only whether or not it vanishes.
\end{remark}

\begin{remark}
Note that it is not immediately clear from the conditions in Definition~\ref{def:comp-roots} that the $c$-compatibility degree is well-defined. Uniqueness follows from the fact that the orbits of the negative roots under~$\tauc$ cover all almost positive roots. Existence is more involved and can be proved by representation theoretic arguments. Our interpretation in Theorem~\ref{thm:three-compatibilities-coincide} below gives alternative direct definitions of $c$-compatibility, and in particular proves directly existence and uniqueness.
\end{remark}

\begin{remark}
As observed by S.~Fomin and A.~Zelevinsky in~\cite[Proposition~3.3]{FominZelevinsky-YSystems}, if~$\alpha, \beta$ are two almost positive roots and~$\alpha^\vee,\beta^\vee$ are their dual roots in the dual root system, then ${\cdeg[\alpha][\beta] = \cdeg[\beta^\vee][\alpha^\vee]}$. Although not needed in this paper, we remark that this property also holds for $c$-compatibility degrees. 
\end{remark}


\subsubsection{On positions in the word $\Qc$}
\label{subsec:rootFunction}

In this section, we recall the notion of root functions associated to $\sqc$-clusters in~$\Qc$, and use them in order to define a $\sqc$-compatibility degree on the set of positions in~$\Qc$. This description relies only on linear algebra and is one of the main contributions of this paper. The root function was defined by C.~Ceballos, J.-P.~Labb\'e, and C.~Stump in~\cite[Definition~3.2]{CeballosLabbeStump} and was extensively used by V.~Pilaud and C.~Stump in the construction of Coxeter brick polytopes~\cite{PilaudStump-brickPolytopes}.

\begin{definition}[\cite{CeballosLabbeStump}]
\label{def:rootFunction}
The \defn{root function}
$$\Root{I}{\cdot} : [m] \longrightarrow \Phi$$
associated to a $\sqc$-cluster $I \subseteq [m]$ in~$\Qc$ is defined by
$$\Root{I}{j} \eqdef \wordprod{\sqc}{[j-1]\ssm I}(\alpha_{q_j}),$$
where $\wordprod{\sqc}{X}$ denotes the product of the reflections $q_x \in \Qc$ for $x \in X$ in this order. The \defn{root configuration} of~$I$ is the set $\Roots{I} \eqdef \set{\Root{I}{i}}{i \in I}$ (Although the root configuration is a priori a multi-set for general subword complexes, it is indeed a set with no repeated elements for the particular choice of word $\Qc$.)
\end{definition} 

As proved in~\cite[Section~3.1]{CeballosLabbeStump}, the root function~$\Root{I}{\cdot}$ encodes exchanges in the $\sqc$-cluster~$I$. Namely, any~$i \in I$ can be exchanged with the unique~$j \notin I$ such that~${\Root{I}{j} = \pm \Root{I}{j}}$ (see Lemma~\ref{lem:cluster_flip}), and the root function can be updated during this exchange (see Lemma~\ref{lem:cluster_update}). It was moreover shown in~\cite[Section~6]{PilaudStump-brickPolytopes} that the root configuration~$\Roots{I}$ forms a basis for~$\R^n$ for any given initial $\sqc$-cluster~$I$ in~$\Qc$. It enables us to decompose any other root on this basis to get the following coefficients, which will play a central role in the remainder of the paper.

\begin{definition}[$\sqc$-compatibility degree on positions in $\Qc$]
\label{def:comp-pos}
Fix any initial $\sqc$-cluster~$I \subseteq [m]$ of~$\Qc$.
For any position~$j \in [m]$, we decompose the root $\Root{I}{j}$ on the basis~$\Roots{I}$ as follows:
$$\Root{I}{j} = \sum_{i\in I} \rho_i(j) \Root{I}{i}.$$
For~$i \in I$ and~$j \in [m]$, we define the \defn{$\sqc$-compatibility degree} as the
coefficient
$$ \vcoeff[i][j][\sqc] =
\begin{cases}
	\rho_i(j) &\text{ if } j > i, \\
	-\rho_i(j) &\text{ if } j \leq i. \\
\end{cases}
$$
\end{definition}

According to the following lemma, it is indeed valid to omit to mention the specific $\sqc$-cluster~$I$ in which these coefficients are computed. 
We refer to Section~\ref{subsec:proof1} for the proof of Lemma~\ref{lem:indep1}.

\begin{lemma}
\label{lem:indep1}
The coefficients $\vcoeff[i][j][\sqc]$ are independent of the choice of the $\sqc$-cluster $I \subseteq [m]$ of~$\Qc$ containing~$i$.
\end{lemma}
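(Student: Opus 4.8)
The plan is to reduce the statement to the invariance of $\vcoeff[i][j][\sqc]$ under a \emph{single} flip, and then to establish that invariance by a short linear-algebra computation built on the update rule for root functions (Lemma~\ref{lem:cluster_update}). Fix the positions $i$ and $j$ once and for all, and consider the collection of all $\sqc$-clusters $I \subseteq [m]$ with $i \in I$. By Theorem~\ref{thm:CLS_cluster_complexes} the $\sqc$-clusters are precisely the facets of the subword complex $\subwordComplex[\Qc, \wo]$, and the ones containing $i$ are the facets of its vertex link at $i$. Since links of subword complexes are again subword complexes, and subword complexes are vertex-decomposable balls or spheres with connected flip graph~\cite{KnutsonMiller-subwordComplex, CeballosLabbeStump}, any two $\sqc$-clusters containing $i$ are joined by a sequence of flips, each exchanging some position $i' \ne i$ for some position $j' \ne i$. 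It therefore suffices to prove that $\rho_i(j)$ is unchanged under one such flip (the sign rule in Definition~\ref{def:comp-pos} depends only on the comparison of the fixed indices $i$ and $j$, so it is identical for the two clusters).

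I would then carry out the single-flip step. Let $I' \eqdef (I \ssm \{i'\}) \cup \{j'\}$ with $i' \ne i \ne j'$, and assume $i' < j'$, the opposite case being symmetric. By Lemma~\ref{lem:cluster_update} there is a reflection $t \eqdef s_{\Root{I}{i'}}$ and an interval $J$ of positions bounded by $i'$ and $j'$, with $i' \notin J$ and $j' \in J$, such that $\Root{I'}{k} = t\,\Root{I}{k}$ for $k \in J$ and $\Root{I'}{k} = \Root{I}{k}$ otherwise. The decisive observation is that $t$ differs from the identity by a multiple of $\Root{I}{i'}$, that is $t(x) - x \in \R\,\Root{I}{i'}$ for every $x$; since $\Root{I}{i'}$ is exactly the basis vector of $\Roots{I}$ indexed by $i'$, applying $t$ leaves every coordinate in the basis $\Roots{I}$ unchanged except the one indexed by $i'$.

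Finally I would read off the coefficients through the projection $\pi$ that forgets the $i'$-coordinate relative to the basis $\Roots{I}$. By Lemma~\ref{lem:cluster_flip} we have $\Root{I}{j'} = \pm\,\Root{I}{i'}$, hence $\Root{I'}{j'} = t\,\Root{I}{j'} = \mp\,\Root{I}{i'}$ lies in $\R\,\Root{I}{i'}$ and satisfies $\pi(\Root{I'}{j'}) = 0$; while for every $l \in I' \ssm \{j'\} = I \ssm \{i'\}$ the single-coordinate property gives $\pi(\Root{I'}{l}) = \pi(\Root{I}{l}) = \bar e_l$, the $l$-th basis vector of the quotient space. On the other hand, the same property yields $\pi(\Root{I'}{j}) = \pi(\Root{I}{j})$, whose $i$-coordinate (recall $i \ne i'$) is $\rho_i(j)$ computed in $I$. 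Projecting the expansion $\Root{I'}{j} = \sum_{l \in I'} \rho_l(j)\,\Root{I'}{l}$ and comparing $i$-coordinates (recall also $i \ne j'$, so $i \in I' \ssm \{j'\}$) then shows that $\rho_i(j)$ computed in $I'$ equals $\rho_i(j)$ computed in $I$, as desired.

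I expect the main obstacle to be the single-flip step, and within it the correct use of the update rule: once Lemma~\ref{lem:cluster_update} is in hand, the crux is the structural fact that the reflection $t$ alters only the coordinate indexed by the flipped position $i'$, which is precisely what makes $\pi$ annihilate the new basis vector $\Root{I'}{j'}$ while fixing all the others. The connectivity input is routine given the subword-complex description, but it is essential in order to pass from a single flip to arbitrary pairs of $\sqc$-clusters containing $i$.
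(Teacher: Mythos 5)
Your proposal is correct and follows essentially the same route as the paper: reduce to a single flip using flip-connectivity of the $\sqc$-clusters containing~$i$, then observe via Lemma~\ref{lem:cluster_update} that the flip changes each root $\Root{I}{k}$ only by a multiple of $\Root{I}{i'}$ (this is exactly the paper's Lemma~\ref{lem:flip_root_function}, which you re-derive inline from the fact that $t(x)-x \in \R\,\Root{I}{i'}$), so all coordinates indexed by $l \neq i'$ are preserved. Your packaging through the quotient projection~$\pi$ is a slightly more explicit version of the paper's coefficient comparison, but the decomposition, the key lemma, and the conclusion are the same.
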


Moreover, this compatibility degree is well defined for any pair of positions~${i,j \in [m]}$ of the word $\Qc$, since for any~$i$ there is always a~$\sqc$-cluster~$I$ containing~$i$.


\section{Main results: Three descriptions of $d$-vectors}
\label{sec:mainResults}

In this section we present the main results of this paper. 

\begin{theorem}\label{thm:three-compatibilities-coincide}
The three notions of compatibility degrees on the set of cluster variables, almost positive roots, and positions in the word $\Qc$ coincide under the bijections of Section~\ref{subsec:bijections}. More precisely, for every pair of positions~$i,j$ in the word $\Qc$ we have
$$d(\posToVarc(i), \posToVarc(j)) = \cdeg[\posToRootsc(i)][\posToRootsc(j)][c] = \vcoeff[i][j][\sqc].$$
In particular, if $c$ is a bipartite Coxeter element, then these coefficients coincide with the classical compatibility degrees of S.~Fomin and A.~Zelevinsky~\cite[Section~3.1]{FominZelevinsky-YSystems} (except for Remark~\ref{rem:cdeg}).
\end{theorem}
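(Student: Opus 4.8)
The plan is to establish the two equalities in the theorem separately, exploiting the common rotation-equivariance structure of all three notions. I would first prove
$$\cdeg[\posToRootsc(i)][\posToRootsc(j)][c] = \vcoeff[i][j][\sqc]$$
by showing that the $\sqc$-compatibility degree satisfies the two characterizing properties of Definition~\ref{def:comp-roots}, transported through the bijection~$\posToRootsc$. For the normalization property~\eqref{compatibility-relation1}, I would take $i$ to be the position of a letter $c_k$ of the initial prefix~$\sqc$, so that $\posToRootsc(i) = -\alpha_{c_k}$. For such a choice one can use the initial $\sqc$-cluster $I = [n]$ (the set of positions of the prefix~$\sqc$), whose root configuration consists precisely of the negative simple roots $\{-\alpha_{c_1},\dots,-\alpha_{c_n}\}$; decomposing $\Root{I}{j}$ on this basis then literally reads off the simple-root coordinates of $\posToRootsc(j)$, which by definition of~$\cdeg[-\alpha_i][\beta][c]$ gives the required $b_i$ (after the sign adjustment in Definition~\ref{def:comp-pos}, which is set up to match). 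For the rotation-invariance property~\eqref{compatibility-relation2}, the key is that the rotation map $\tausqc$ corresponds to $\tauc$ under $\posToRootsc$ (Definition~\ref{def:rotation_map}), so I need to verify $\vcoeff[i][j][\sqc] = \vcoeff[\tausqc(i)][\tausqc(j)][\sqc]$. This should follow from the way the root function updates under the mutation that realizes one step of rotation, together with the independence statement of Lemma~\ref{lem:indep1}: applying the full sequence of flips $\mu_{c_1},\dots,\mu_{c_n}$ transforms any initial $\sqc$-cluster $I$ into another one while relabeling positions by $\tausqc$, and the linear-algebra coefficients $\rho_i(j)$ are preserved because the root function is transported by the corresponding orthogonal-reflection dynamics. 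By uniqueness (the orbits of negative roots under $\tauc$ cover $\Phi_{\ge-1}$), these two properties force the identity.

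Next I would prove
$$d(\posToVarc(i), \posToVarc(j)) = \vcoeff[i][j][\sqc].$$
Here the cleanest route is again rotation-equivariance. The compatibility degree on cluster variables $d(x,y)$ is rotation-invariant, $d(x,y) = d(\tau x, \tau y)$, because $\tau$ is realized by prepending the mutation sequence $\mu_{c_1}\to\cdots\to\mu_{c_n}$ (Definition~\ref{def:rotation_map}) and a full such sequence returns to an initial seed of the same combinatorial type, leaving $d$-vector entries unchanged. For the base case I would use the initial seed $X_c$ itself: when $\posToRootsc(i) = -\alpha_{c_k}$ is a negative simple root, $\posToVarc(i)$ is the initial cluster variable at vertex $c_k$, and the $d$-vector entry $d(\posToVarc(i),\posToVarc(j))$ is exactly the $c_k$-coordinate of the $d$-vector of $\posToVarc(j)$ with respect to $X_c$. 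By the established result of Fomin--Zelevinsky and its extension to Coxeter elements (recalled in the introduction), this coordinate equals the simple-root coefficient $b_{c_k}$ of $\posToRootsc(j) = \sum b_i\alpha_i$, which is precisely $\vcoeff[i][j][\sqc]$ on negative simple roots by the previous paragraph. Invoking both rotation invariances simultaneously and the fact that $\tauc$-orbits of negative roots cover $\Phi_{\ge-1}$ then propagates the equality to all pairs $(i,j)$.

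I expect the main obstacle to be the rotation step in the first equality: verifying that the purely linear-algebraic coefficients $\rho_i(j)$ of Definition~\ref{def:comp-pos} are genuinely invariant under $\tausqc$. This requires a careful bookkeeping of how the root function $\Root{I}{\cdot}$ and its root configuration $\Roots{I}$ transform under the sequence of flips realizing one rotation step, using the update rule (Lemma~\ref{lem:cluster_update}) and the independence Lemma~\ref{lem:indep1}. The sign conventions in Definition~\ref{def:comp-pos} (the split between $j > i$ and $j \le i$) are precisely what is needed to make the normalization match on negative simple roots and to keep the rotation identity consistent across the ``wrap-around'' case of $\tausqc$ (where the next occurrence of a letter is replaced by the first occurrence of $\wo s \wo$), so I would treat that boundary case explicitly. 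Once invariance is secured, the two uniqueness arguments are routine, and the reduction to the known Fomin--Zelevinsky computation handles the passage to cluster variables.
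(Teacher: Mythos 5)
Your overall architecture is the paper's: establish that both $d(\posToVarc(\cdot),\posToVarc(\cdot))$ and $\vcoeff[\cdot][\cdot][\sqc]$ satisfy the normalization~\eqref{compatibility-relation1} and the rotation invariance~\eqref{compatibility-relation2} characterizing $\cdeg[\cdot][\cdot][c]$, then conclude by uniqueness since the $\tauc$-orbits of the negative simple roots cover $\Phi_{\ge-1}$. Your cluster-variable half matches the paper's Proposition~\ref{coro:compatibility-denom}, with one caution: since $d(x,y)$ is computed in an \emph{arbitrary} cluster containing $x$, you need $\dvector(X,y)=\dvector(\tau X,\tau y)$ for every seed $X$, whereas your justification (``a full such sequence returns to an initial seed of the same combinatorial type'') only covers $X=X_c$; the paper's Lemma~\ref{lem:var-rotation} bridges to general seeds via the composition identity $y(X)=y(X_c)\circ X_c(X)=\tau y(\tau X_c)\circ \tau X_c(\tau X)=\tau y(\tau X)$. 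Your normalization for $\vcoeff$ via the prefix cluster $I=[n]$ is also the paper's computation, up to a small slip: the root configuration $\Roots{[n]}$ consists of the \emph{positive} simple roots $\alpha_{c_i}$, not the negative ones; the match with $\cdeg[-\alpha_i][\beta][c]=b_i$ is achieved precisely by the sign convention of Definition~\ref{def:comp-pos}, as you note.

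The step that does not go through as written is the rotation invariance $\vcoeff[i][j][\sqc]=\vcoeff[\tausqc(i)][\tausqc(j)][\sqc]$, which you propose to deduce from Lemma~\ref{lem:indep1} plus the flip update rule. Lemma~\ref{lem:indep1} holds the pair $(i,j)$ fixed and varies the cluster $I$ --- indeed its proof only allows flips that keep $i$ inside the cluster --- so it cannot connect coefficients attached to two \emph{different} pairs of positions; and the flip sequence realizing one rotation step necessarily flips $i$ itself out of the cluster (replacing it by $\tausqc(i)$), which is exactly the move the lemma cannot cross. Similarly, the update rule of Lemma~\ref{lem:cluster_update} compares $\Root{I}{k}$ and $\Root{I'}{k}$ at the \emph{same} position $k$ across a flip, so chaining it along $\mu_{c_1},\dots,\mu_{c_n}$ (even granting that this sequence carries $I$ to $\tausqc(I)$, which itself is only recorded in the paper as Corollary~\ref{coro:rotation} and is proved there by other means) never relates $\Root{I}{j}$ to $\Root{\tausqc(I)}{\tausqc(j)}$. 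The wrap-around case you rightly flag as delicate is precisely where a naive reflection-transport argument breaks, and you offer no mechanism for it. The paper's actual device is different: it jumps the first letter of $\sqc$, passing from $\Qc$ to the word $\Q_{\sqc'}$ for $\sqc'=(c_2,\dots,c_n,s)$ (Lemmas~\ref{lem:rotation1} and~\ref{lem:cluster-jumping}), computes the four-case effect of jumping on the root function, and checks in Lemma~\ref{lem:compatibility-rotation} that the induced sign changes ($\rho_{i'}(j')=-\rho_i(j)$ exactly when $i=1$ or $j=1$) cancel against the $j\le i$ versus $j>i$ convention of Definition~\ref{def:comp-pos}; $n$ successive jumps return to $\sqc$ with positions shifted by $\tausqc^{-1}$, yielding~\eqref{eq:comp-coef2}. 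To repair your plan you would need this jumping computation, or an equivalent closed-form transport such as $\Root{\tausqc(I)}{\tausqc(j)}=\pm\, c\,\Root{I}{j}$ with the signs fully controlled at the wrap-around --- not Lemma~\ref{lem:indep1}.
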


The proof of this theorem can be found in Section~\ref{sec:main-proof}.
The following three statements are the main results of this paper and are direct consequences of Theorem~\ref{thm:three-compatibilities-coincide}. 
The first statement describes the denominator vectors in terms of the compatibility degrees of~\cite[Section~3.1]{FominZelevinsky-YSystems}.

\begin{corollary}
\label{thm:d_vectors-compatibilty}
Let~$B \eqdef \{\beta_1, \dots, \beta_n\} \subseteq \Phi_{\ge-1}$ be a (classical) cluster in the sense of S.~Fomin and A.~Zelevinsky~\cite[Theorem~1.9]{FominZelevinsky-ClusterAlgebrasII}, and let~$\beta \in \Phi_{\ge-1}$ be an almost positive root. Then the $d$-vector~$\dvector(B,\beta)$ of the cluster variable~$\rootsToVar(\beta)$ with respect to the initial cluster seed~$\rootsToVar(B) = \{\rootsToVar(\beta_1), \dots, \rootsToVar(\beta_n)\}$ is given by
$$\dvector(B,\beta) = \big( \cdeg[\beta_1][\beta], \dots , \cdeg[\beta_n][\beta] \big),$$
where $\cdeg[\beta_i][\beta]$ is the compatibility degree of $\beta$ with respect to $\beta_i$ as defined by S.~Fomin and A.~Zelevinsky~\cite[Section~3.1]{FominZelevinsky-YSystems} (except for Remark~\ref{rem:cdeg}).
\end{corollary}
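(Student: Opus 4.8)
The plan is to derive Corollary~\ref{thm:d_vectors-compatibilty} as an immediate specialization of Theorem~\ref{thm:three-compatibilities-coincide}. The corollary is the bipartite case of the general three-way coincidence, restated purely in the language of almost positive roots and cluster variables, so almost all of the work is already done by the time this statement appears; the remaining task is to unwind the definitions and bijections so that the right-hand side of Theorem~\ref{thm:three-compatibilities-coincide} becomes the displayed $d$-vector.

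First I would fix a bipartite Coxeter element $c$ (which exists in any finite type), so that $\rootsToVarc = \rootsToVar$ and the $c$-compatibility degree $\cdeg[\cdot][\cdot][c]$ agrees with the classical compatibility degree $\cdeg[\cdot][\cdot]$ of Fomin--Zelevinsky, up to the diagonal discrepancy recorded in Remark~\ref{rem:cdeg}. Since $\rootsToVar$ is a bijection from $\Phi_{\ge-1}$ to the cluster variables of $\cA(W)$, the set $\rootsToVar(B) = \{\rootsToVar(\beta_1),\dots,\rootsToVar(\beta_n)\}$ is a genuine cluster, so the $d$-vector $\dvector(\rootsToVar(B),\rootsToVar(\beta))$ is defined, and by the very definition $\dvector_c(B,\beta) \eqdef \dvector(\rootsToVar(B),\rootsToVar(\beta))$ this is exactly the quantity the corollary names $\dvector(B,\beta)$.

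Next I would compute the entries of this $d$-vector one coordinate at a time using the compatibility degree on cluster variables. By Definition~\ref{def:comp-var}, the $\rootsToVar(\beta_i)$-component of $\dvector(\rootsToVar(B),\rootsToVar(\beta))$ equals $d(\rootsToVar(\beta_i),\rootsToVar(\beta))$, and this is well defined precisely because Lemma~\ref{lem:comp-denominator} guarantees independence of the cluster. Now Theorem~\ref{thm:three-compatibilities-coincide} identifies $d(\posToVarc(i),\posToVarc(j))$ with $\cdeg[\posToRootsc(i)][\posToRootsc(j)][c]$; writing $\beta_i = \posToRootsc(i)$ and $\beta = \posToRootsc(j)$ via the bijection $\posToRootsc$ and using $\posToVarc = \rootsToVarc \circ \posToRootsc = \rootsToVar \circ \posToRootsc$ in the bipartite case, this reads $d(\rootsToVar(\beta_i),\rootsToVar(\beta)) = \cdeg[\beta_i][\beta][c]$. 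Finally I would invoke Remark~\ref{rem:cdeg} to replace $\cdeg[\beta_i][\beta][c]$ by the classical $\cdeg[\beta_i][\beta]$, yielding $\dvector(B,\beta) = (\cdeg[\beta_1][\beta],\dots,\cdeg[\beta_n][\beta])$ as claimed.

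The only point requiring a little care, and the closest thing to an obstacle, is the bookkeeping at the diagonal: when $\beta_i = \beta$ the classical and $c$-compatibility degrees differ by one (the former being $0$, the latter $-1$), so one must confirm that the statement is consistent with the convention $\dvector(X,x_i) = -e_i$ for initial variables. This is exactly the exception flagged parenthetically in the corollary by the phrase ``except for Remark~\ref{rem:cdeg}'', so no genuine difficulty arises; the substantive content is entirely carried by Theorem~\ref{thm:three-compatibilities-coincide}, whose proof is deferred to Section~\ref{sec:main-proof}, and the corollary itself is a short translation.
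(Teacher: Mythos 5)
Your proposal is correct and follows essentially the same route as the paper, which derives Corollary~\ref{thm:d_vectors-compatibilty} as a direct consequence of Theorem~\ref{thm:three-compatibilities-coincide} by specializing to a bipartite Coxeter element and invoking Remark~\ref{rem:cdeg}; your coordinate-wise unwinding via Definition~\ref{def:comp-var} and Lemma~\ref{lem:comp-denominator} is exactly the intended translation, spelled out in slightly more detail than the paper bothers to record.
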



The next statement extends this result to any Coxeter element~$c$ of~$W$.

\begin{corollary}
\label{thm:d_vectors-c_compatibilty}
Let~$B \eqdef \{\beta_1, \dots, \beta_n\} \subseteq \Phi_{\ge-1}$ be a $c$-cluster in the sense of N.~Reading~\cite[Section~ 7]{Reading-CoxeterSortable}, and let~$\beta \in \Phi_{\ge-1}$ be an almost positive root. Then the $d$-vector~$\dvector_c(B,\beta)$ of the cluster variable~$\rootsToVarc(\beta)$ with respect to the initial cluster seed~$\rootsToVarc(B) = \{\rootsToVarc(\beta_1), \dots, \rootsToVarc(\beta_n)\}$ is given by
$$\dvector_c(B,\beta) = \big( \cdeg[\beta_1][\beta][c], \dots , \cdeg[\beta_n][\beta][c] \big),$$
where $\cdeg[\beta_i][\beta][c]$ is the $c$-compatibility degree of $\beta$ with respect to $\beta_i$ as defined in Definition~\ref{def:comp-roots}.
\end{corollary}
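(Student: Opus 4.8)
The plan is to obtain this statement as a purely formal consequence of Theorem~\ref{thm:three-compatibilities-coincide}, by unwinding the definition of $\dvector_c(B,\beta)$ and matching indices through the bijections of Section~\ref{subsec:bijections}. First I would recall that, by definition, $\dvector_c(B,\beta) = \dvector\big(\rootsToVarc(B), \rootsToVarc(\beta)\big)$, so the claim concerns the genuine denominator vector of the cluster variable $\rootsToVarc(\beta)$ expressed in the seed $\rootsToVarc(B)$. Since $B$ is a $c$-cluster, the set $\rootsToVarc(B) = \{\rootsToVarc(\beta_1), \dots, \rootsToVarc(\beta_n)\}$ is a cluster of $\cA(W)$, so this vector is well defined, and I would order its entries so that the $i$-th coordinate records the exponent of $\rootsToVarc(\beta_i)$.

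Next I would identify each coordinate with a compatibility degree on cluster variables. By Lemma~\ref{lem:comp-denominator}, the $i$-th entry of $\dvector\big(\rootsToVarc(B), \rootsToVarc(\beta)\big)$ depends only on the variables $\rootsToVarc(\beta_i)$ and $\rootsToVarc(\beta)$ and not on the rest of the seed; hence Definition~\ref{def:comp-var} lets me rewrite it as $d\big(\rootsToVarc(\beta_i), \rootsToVarc(\beta)\big)$. This reduces the corollary to the single equality $d\big(\rootsToVarc(\beta_i), \rootsToVarc(\beta)\big) = \cdeg[\beta_i][\beta][c]$ for each $i$. To close this, I would use that $\posToRootsc$ is a bijection to pick positions $i', j' \in [m]$ with $\posToRootsc(i') = \beta_i$ and $\posToRootsc(j') = \beta$; since $\posToVarc = \rootsToVarc \circ \posToRootsc$, we have $\posToVarc(i') = \rootsToVarc(\beta_i)$ and $\posToVarc(j') = \rootsToVarc(\beta)$, so the first equality of Theorem~\ref{thm:three-compatibilities-coincide} reads precisely $d\big(\rootsToVarc(\beta_i), \rootsToVarc(\beta)\big) = \cdeg[\beta_i][\beta][c]$. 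Assembling the $n$ coordinates then yields $\dvector_c(B,\beta) = \big(\cdeg[\beta_1][\beta][c], \dots, \cdeg[\beta_n][\beta][c]\big)$.

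The hard part is not in this corollary at all: essentially all the content lives in Theorem~\ref{thm:three-compatibilities-coincide}, and the argument above is only bookkeeping. The two points that do require a little care are the appeal to Lemma~\ref{lem:comp-denominator}, which is what legitimizes speaking of a coordinate-wise degree $d(x,y)$ independent of the ambient cluster, and the index matching through the composition $\posToVarc = \rootsToVarc \circ \posToRootsc$, which is what ensures that the $\beta_i$-entry of the $d$-vector is paired with $\cdeg[\beta_i][\beta][c]$ rather than with some permuted entry. The bipartite statement of Corollary~\ref{thm:d_vectors-compatibilty} then follows as the special case where $c$ is bipartite, using Remark~\ref{rem:cdeg} to replace $\cdeg[\cdot][\cdot][c]$ by the classical $\cdeg[\cdot][\cdot]$.
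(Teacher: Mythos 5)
Your proposal is correct and takes essentially the same route as the paper: the paper states Corollary~\ref{thm:d_vectors-c_compatibilty} as a direct consequence of Theorem~\ref{thm:three-compatibilities-coincide} without spelling out the details, and your argument simply makes explicit the intended bookkeeping, namely rewriting each coordinate of $\dvector_c(B,\beta)$ as $d\big(\rootsToVarc(\beta_i),\rootsToVarc(\beta)\big)$ via Definition~\ref{def:comp-var} (well defined by Lemma~\ref{lem:comp-denominator}) and then matching indices through the bijection $\posToVarc = \rootsToVarc \circ \posToRootsc$ to invoke the first equality of the theorem.
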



Finally, the third statement describes the denominator vectors in terms of the coefficients~$\vcoeff[i][j][\sqc]$ obtained from the word $\Qc$.

\begin{corollary}
\label{thm:d-vector}
Let~$I \subseteq [m]$ be a $\sqc$-cluster and~$j \in [m]$ be a position in~$\Qc$.
Then the $d$-vector~$\dvector_\sqc(I,j)$ of the cluster variable~$\posToVarc(j)$ with respect to the initial cluster seed~$\posToVarc(I) = \set{\posToVarc(i)}{i \in I}$ is given by
$$\dvector_\sqc(I,j) = \big( \vcoeff[i][j][\sqc] \big)_{i\in I}.$$
\end{corollary}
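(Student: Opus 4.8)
The plan is to obtain Corollary~\ref{thm:d-vector} as an immediate consequence of Theorem~\ref{thm:three-compatibilities-coincide}, since the latter already identifies the $\sqc$-compatibility degree $\vcoeff[i][j][\sqc]$ with the compatibility degree $d(\posToVarc(i),\posToVarc(j))$ on cluster variables. First I would unwind the definition of the $d$-vector in the statement. By the definition given in Section~\ref{subsec:bijections}, we have
$$\dvector_\sqc(I,j) = \dvector\big(\posToVarc(I), \posToVarc(j)\big),$$
so the claim is that for each $i \in I$ the $i$-th entry of this $d$-vector, indexed by the initial cluster variable $\posToVarc(i)$, equals $\vcoeff[i][j][\sqc]$.

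Next I would invoke Lemma~\ref{lem:comp-denominator} together with Definition~\ref{def:comp-var}: since $\posToVarc(I)$ is a genuine cluster of $\cA(W)$ containing the variable $\posToVarc(i)$, the $\posToVarc(i)$-component of $\dvector\big(\posToVarc(I),\posToVarc(j)\big)$ is by definition exactly the compatibility degree $d(\posToVarc(i),\posToVarc(j))$, and this value does not depend on the particular cluster chosen (which is what makes the notation $d(\cdot,\cdot)$ meaningful). Thus the $i$-th entry of $\dvector_\sqc(I,j)$ is precisely $d(\posToVarc(i),\posToVarc(j))$.

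Finally, I would apply Theorem~\ref{thm:three-compatibilities-coincide}, which asserts that for every pair of positions $i,j$ in $\Qc$ we have $d(\posToVarc(i),\posToVarc(j)) = \vcoeff[i][j][\sqc]$. Combining the three steps yields the $i$-th entry of $\dvector_\sqc(I,j)$ as $\vcoeff[i][j][\sqc]$ for each $i \in I$, which is exactly the claimed equality $\dvector_\sqc(I,j) = \big(\vcoeff[i][j][\sqc]\big)_{i \in I}$.

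I do not expect any genuine obstacle here, since all of the substance is packaged into Theorem~\ref{thm:three-compatibilities-coincide} and the supporting Lemma~\ref{lem:comp-denominator}; the corollary is a matter of composing definitions correctly. The one point requiring a little care is purely bookkeeping: one must check that the indexing of the components of the $d$-vector matches the indexing of the tuple $\big(\vcoeff[i][j][\sqc]\big)_{i \in I}$, i.e.\ that the component associated with the initial variable $\posToVarc(i)$ is indeed listed under the index $i \in I$. This is consistent with the conventions set up in Section~\ref{subsec:bijections}, so the proof reduces to citing Theorem~\ref{thm:three-compatibilities-coincide} and unwinding the definition of $\dvector_\sqc(I,j)$.
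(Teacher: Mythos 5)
Your proof is correct and follows exactly the route the paper intends: the paper states Corollary~\ref{thm:d-vector} as a direct consequence of Theorem~\ref{thm:three-compatibilities-coincide}, and your unwinding via Lemma~\ref{lem:comp-denominator} and Definition~\ref{def:comp-var} is precisely the routine bookkeeping that fills in that deduction.
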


As a consequence, we obtain the following result which is proven in Section~\ref{sec:proof-coro-positive}.  

\begin{corollary}
\label{coro:positive}
For cluster algebras of finite type, the $d$-vector of a cluster variable that is not in the initial seed is non-negative and not equal to zero.
\end{corollary}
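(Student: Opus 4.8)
The plan is to deduce the statement from the compatibility-degree description of $d$-vectors (Corollary~\ref{thm:d_vectors-c_compatibilty}), treating non-negativity and non-vanishing separately, and to borrow the subword-complex description (Corollary~\ref{thm:d-vector}) for the cleanest handling of non-vanishing. A non-initial cluster variable has the form $\rootsToVarc(\beta)$ with $\beta \in \Phi_{\ge-1}$ not belonging to the $c$-cluster $B = \{\beta_1,\dots,\beta_n\}$ indexing the initial seed, and Corollary~\ref{thm:d_vectors-c_compatibilty} identifies its $d$-vector with $(\cdeg[\beta_1][\beta][c],\dots,\cdeg[\beta_n][\beta][c])$. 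So it suffices to show that, for $\beta \ne \beta_i$, each entry $\cdeg[\beta_i][\beta][c]$ is $\ge 0$, and that the entries are not all zero.

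For non-negativity I would argue directly from the two characterizing properties of Definition~\ref{def:comp-roots}. Using the fact (recalled after that definition) that the $\tauc$-orbits of the negative simple roots cover all of $\Phi_{\ge-1}$, for each $i$ there is an integer $k$ with $\tauc^{-k}(\beta_i) = -\alpha_\ell$ for some $\ell$. Applying the rotation-invariance relation~\eqref{compatibility-relation2} $k$ times gives
$$\cdeg[\beta_i][\beta][c] = \cdeg[-\alpha_\ell][\tauc^{-k}(\beta)][c],$$
and relation~\eqref{compatibility-relation1} evaluates the right-hand side as the coefficient of $\alpha_\ell$ in $\tauc^{-k}(\beta)$. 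Since $\tauc$ is a bijection and $\beta \ne \beta_i$, we have $\tauc^{-k}(\beta) \ne -\alpha_\ell$, so $\tauc^{-k}(\beta)$ is either a positive root (all coefficients $\ge 0$) or a negative simple root $-\alpha_{\ell'}$ with $\ell' \ne \ell$ (coefficient of $\alpha_\ell$ equal to $0$). In either case the entry is $\ge 0$.

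For non-vanishing the slickest route is the subword picture. By Corollary~\ref{thm:d-vector} the same $d$-vector equals $(\vcoeff[i][j][\sqc])_{i\in I}$, where $I$ is the $\sqc$-cluster with $\posToVarc(I)$ the initial seed and $j \notin I$ is the position with $\posToVarc(j) = \rootsToVarc(\beta)$. By Definition~\ref{def:comp-pos} these entries are, up to sign, the coordinates $\rho_i(j)$ in the expansion $\Root{I}{j} = \sum_{i\in I}\rho_i(j)\,\Root{I}{i}$ on the basis $\Roots{I}$. If all of them vanished we would get $\Root{I}{j} = 0$; but $\Root{I}{j}$ is obtained by applying a product of reflections to the simple root $\alpha_{q_j}$, hence lies in $\Phi$ and is in particular nonzero, a contradiction. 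Thus the $d$-vector is nonzero. Alternatively one can stay within the root framework: if all $\cdeg[\beta_i][\beta][c]$ vanished then $\beta$ would be $c$-compatible with every element of $B$, and since $\beta \notin B$ this would contradict the maximality of the $c$-cluster $B$ in the $c$-cluster complex.

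The main obstacle is the non-negativity step, and specifically making the rotation reduction airtight: one must check that the orbit-covering statement really lets us reduce every $\cdeg[\beta_i][\cdot][c]$ to an evaluation against a negative simple root, and that the bijectivity of $\tauc$ guarantees $\tauc^{-k}(\beta) \ne -\alpha_\ell$ precisely because $\beta \ne \beta_i$ — this is where the hypothesis that the variable is non-initial (that is, $\beta \notin B$) is used. Everything else is bookkeeping once Theorem~\ref{thm:three-compatibilities-coincide} and the three descriptions of the $d$-vector are in hand.
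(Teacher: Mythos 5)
Your proof is correct, but it assembles the two halves differently from the paper, which in fact records \emph{two} proofs of this corollary in Section~\ref{sec:proof-coro-positive}. For non-negativity, your reduction --- using the fact that the $\tauc$-orbits of the negative simple roots cover $\Phi_{\ge-1}$ to rewrite $\cdeg[\beta_i][\beta][c]$ as $\cdeg[-\alpha_\ell][\tauc^{-k}(\beta)][c]$ via relation~\eqref{compatibility-relation2}, and then reading off a coefficient of the almost positive root $\tauc^{-k}(\beta) \neq -\alpha_\ell$ via relation~\eqref{compatibility-relation1} --- is the same mechanism as the paper's second proof, just executed on the axioms of Definition~\ref{def:comp-roots} rather than on the coefficients $\vcoeff[i][j][\sqc]$, where the paper rotates the initial prefix of $\Qc$ using Proposition~\ref{coro:compatibility-coef}; the paper's first proof instead simply cites non-negativity as known (Lemma~\ref{lem:non-neg1}). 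Where you genuinely diverge is the non-vanishing step: both of the paper's proofs deduce it from the characterization ``compatibility degree zero if and only if the two elements lie in a common cluster,'' which the first proof imports from the literature and the second proof establishes via rotation invariance together with~\cite[Theorem~5.1]{CeballosLabbeStump} on parabolic root subsystems, whereas you need only the observation that $\Root{I}{j}$ is a root, hence nonzero, so its coordinates $\rho_i(j)$ in the basis $\Roots{I}$ cannot all vanish. Your argument is more elementary and self-contained, and would transfer verbatim to any root-independent subword complex (the intended generalization mentioned in the paper), though it proves strictly less: the paper's lemmas additionally identify \emph{which} entries vanish, namely those indexed by elements compatible with $\beta$. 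Your sketched ``alternative'' route for non-vanishing (maximality of the $c$-cluster $B$) is exactly the paper's first proof, but note that it silently relies on the vanishing characterization in Lemma~\ref{lem:non-neg1} and on flagness of the cluster complex (pairwise compatibility implying a face), both of which your primary argument avoids.
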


This corollary was conjectured by S.~Fomin and A.~Zelevinsky for arbitrary cluster algebras~\cite[Conjecture~7.4]{FominZelevinsky-ClusterAlgebrasIV}. In the case of cluster algebras of finite type, this conjecture also follows from~\cite[Theorem~4.4 and Remark~4.5]{CalderoChapotonSchiffler} and from~\cite[Theorem~2.2]{BuanMarshReiten}, where the authors show that the $d$-vectors can be computed as the dimension vectors of certain indecomposable modules.


\section{Cluster variables and orbits of positions in the bi-infinite word~$\tQ$}
\label{subsec:mobius}

We now want to present the results of this paper in terms of orbits of letters of the bi-infinite word~$\tQ \eqdef (\tsqq_i)_{i \in \Z}$ obtained by infinitely many repetitions of the product of all generators in~$S$. Up to commutations of consecutive commuting letters, this word does not depend on the order of the elements of~$S$ in this product.  Our motivation is to avoid the dependence in the Coxeter element~$c$, which is just a technical tool to deal with clusters in terms of almost positive roots or positions in the word~$\Qc$. This point of view was already considered in type~$A$ by V.~Pilaud and M.~Pocchiola in~\cite{PilaudPocchiola} in their study of pseudoline arrangements on the M\"obius strip. For arbitrary finite Coxeter groups, the good formalism is given by the Auslander-Reiten quiver (see \eg~\cite[Section~8]{CeballosLabbeStump}). However, we do not really need this formalism here and skip its presentation. The proofs of this section are omitted but can be easily deduced from the results proved in Section~\ref{sec:main-proof} (more precisely from Lemmas~\ref{lem:rotation1}, \ref{lem:cluster-jumping} and~\ref{lem:compatibility-rotation}) and Corollary~\ref{thm:d-vector}.

We denote by~$\eta :S\rightarrow S$ the involution~$\eta(s)=\wo s \wo$ which conjugates a simple reflection by the longest element~$\wo$ of~$W$, and by~$\eta(\Qc)$ the word obtained by conjugating each letter of~$\Qc$ by~$\wo$. As observed in~\cite{CeballosLabbeStump}, the bi-infinite word~$\tQ$ coincides, up to commutations of consecutive commuting letters, with the bi-infinite word~$\cdots \Qc \, \eta(\Qc) \, \Qc \, \eta(\Qc) \, \Qc \cdots$ obtained by repeating infinitely many copies of~$\Qc \, \eta(\Qc)$. We can therefore consider the word~$\Qc$ as a fundamental domain in the bi-infinite word~$\tQ$, and a position~$i$ in~$\Qc$ as a representative of an orbit~$\ti$ of positions in~$\tQ$ under the translation map~$\translation : i \mapsto i+m$ (note that this translation maps the letter~$q_i$ to the conjugate letter~$q_{i+m} = \wo q_i \wo$). For a subset~$I$ of positions in~$\Qc$, we denote by~$\tI \eqdef \set{\ti}{i \in I}$ its corresponding orbit in~$\tQ$. It turns out that the orbits of $\sqc$-clusters are now independent of~$\sqc$.

\begin{proposition}
Let~$\sqc$ and~$\sqc'$ be reduced expressions of two Coxeter elements. Let~$I$ and~$I'$ be subsets of positions in~$\Qc$ and~$\Q_{\sqc'}$ respectively such that their orbits~$\tI$ and~$\tI'$ in~$\tQ$ coincide. Then~$I$ is a $\sqc$-cluster in~$\Qc$ if and only if~$I'$ is a $\sqc'$-cluster of~$\Q_{\sqc'}$. We then say that~$\tI = \tI'$ forms a cluster in~$\tQ$.
\end{proposition}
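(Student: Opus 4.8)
The plan is to recast the statement through Theorem~\ref{thm:CLS_cluster_complexes}, which says that a subset of positions of a word is a cluster precisely when its complementary subword is a reduced expression for~$\wo$. So I must prove that, whenever $\tI = \tI'$, the complement $\Qc \ssm I$ spells a reduced word for~$\wo$ if and only if $\Q_{\sqc'} \ssm I'$ does. Since both $\Qc$ and $\Q_{\sqc'}$ have the same length~$m = n + \ell(\wo)$ and embed into~$\tQ$ as fundamental domains for the translation $\translation : i \mapsto i+m$, the hypothesis $\tI = \tI'$ just means that $I$ and $I'$ select the same set of $\translation$-orbits. First I would reduce to the case in which $\sqc$ and $\sqc'$ differ by a single elementary move, using the standard fact that the reduced expressions of all Coxeter elements of~$W$ are connected by (a) commutations of adjacent commuting letters and (b) rotations $\sqc = (c_1,\dots,c_n) \mapsto \sqc' = (c_2,\dots,c_n,c_1)$, which conjugate the Coxeter element by its initial letter (a source-to-sink move on the acyclic orientation~$\weightedQuiverc$).

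The commutation case is immediate: the bi-infinite word~$\tQ$, the orbits, and the property of being a reduced expression for~$\wo$ are all invariant under commutations of consecutive commuting letters, so nothing changes. The substance is the rotation case. Here I would first establish that, up to commutations in~$\tQ$, the word $\Q_{\sqc'}$ is the length-$m$ window of~$\tQ$ shifted by one position relative to the window occupied by~$\Qc$: if $\Qc$ sits at positions $\{1,\dots,m\}$, then $\Q_{\sqc'}$ sits at positions $\{2,\dots,m+1\}$. This is the heart of the argument and rests on the structural behaviour of sorting words under rotation, namely that $c_1 \cdot {\mathrm w}_\circ(\sqc')$ equals ${\mathrm w}_\circ(\sqc) \cdot \eta(c_1)$ up to commutations (compare \cite[Proposition~7.1]{CeballosLabbeStump} and the rotation lemmas of Section~\ref{sec:main-proof}, in particular Lemma~\ref{lem:cluster-jumping}), together with the consistency check that the window $\{2,\dots,m+1\}$ multiplies out to $c_1 c c_1 \wo$, the element represented by~$\Q_{\sqc'}$.

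Granting this identification, the two windows share the orbits of positions $2,\dots,m$ and differ only in the orbit $\tilde 1 = \widetilde{m+1}$, whose representative moves from position~$1$ (carrying the letter~$c_1$) to position~$m+1$ (carrying the conjugate letter $\eta(c_1) = \wo c_1 \wo$). Consequently $\tI = \tI'$ forces $I' = I$ when $1 \notin I$, and $I' = (I \ssm \{1\}) \cup \{m+1\}$ when $1 \in I$. If $1 \in I$, then neither window keeps position~$1$ nor position~$m+1$ in its complement, so $\Qc \ssm I$ and $\Q_{\sqc'} \ssm I'$ are literally the same subword and there is nothing to prove. If $1 \notin I$, write $W$ for the common middle subword indexed by $\{2,\dots,m\} \ssm I$; then $\Qc \ssm I = (c_1, W)$ while $\Q_{\sqc'} \ssm I' = (W, \eta(c_1))$. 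A one-line computation shows that $(c_1,W)$ is a reduced expression for~$\wo$ if and only if $W$ is a reduced expression for $c_1 \wo$, and likewise $(W, \eta(c_1))$ is reduced for~$\wo$ if and only if $W$ is reduced for $\wo\,\eta(c_1) = c_1\wo$; the two conditions coincide, which finishes the rotation step and, by the reduction, the proposition.

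The step I expect to be the main obstacle is the window identification in the second paragraph: pinning down exactly where $\Q_{\sqc'}$ sits inside~$\tQ$ and verifying that its rotated initial block matches the tail of~$\Qc$ up to commutations requires care, precisely because crossing the period boundary between~$\Qc$ and~$\eta(\Qc)$ applies the conjugation $\eta = \wo(\cdot)\wo$. Once this bookkeeping is controlled, everything downstream is the elementary reduced-word manipulation above, and the remaining cases are either trivial (commutations) or reduce to it by the connectivity of Coxeter elements under rotations.
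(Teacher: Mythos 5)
Your proposal is correct and follows essentially the same route as the paper, which omits a written proof of this proposition but states that it follows from the jumping lemmas of Section~\ref{sec:main-proof}: your window identification in~$\tQ$ is exactly Lemma~\ref{lem:rotation1} (the jumping word of~$\Qc$ is~$\Q_{\sqc'}$ up to commutations), and your reduced-word computation with $(c_1,W)$ versus $(W,\eta(c_1))$, using $\wo\,\eta(c_1)=c_1\wo$, reproduces the paper's proof of Lemma~\ref{lem:cluster-jumping}. Your explicit reduction to single rotations via commutations and source-to-sink connectivity of Coxeter elements is precisely the iteration of jumps the paper has in mind, so there is nothing substantively different to flag.
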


In other words, we obtain a bijection~$\orbToVar$ from the orbits of positions in~$\tQ$ (under the translation map~${\translation : i \mapsto i+m}$) to the cluster variables of~$\cA(W)$. A collection of orbits forms a cluster if and only if their representatives in any (or equivalently all) fundamental domain~$\Qc$ for~$\translation$ form the complement of a reduced expression for~$\wo$. Choosing a particular Coxeter element~$c$ defines a specific fundamental domain~$\Qc$ in~$\tQ$, which provides specific bijections~$\posToRootsc$ and ~$\rootsToVarc$ with almost positive roots and cluster variables. We insist on the fact that~$\orbToVar$ does not depend on the choice of a Coxeter element, while~$\posToRootsc$ and ~$\rootsToVarc$ do.

We now want to describe the results of this paper directly on the bi-infinite word~$\tQ$. We first transport the $d$-vectors through the bijection~$\orbToVar$: for a given initial cluster seed~$\tI$ in~$\tQ$, and an orbit~$\tj$ of positions in~$\tQ$, we define the $d$-vector
$$\dvector(\tI,\tj) \eqdef \dvector \big( \orbToVar(\tI), \orbToVar(\tj) \big).$$
We want to express these $d$-vectors in terms of the coefficients~$\vcoeff[i][j][\sqc]$ from Definition~\ref{def:comp-pos}. For this, we first check that these coefficients are independent of the fundamental domain~$\Qc$ on which they are computed.

\begin{proposition}
\label{prop:coeffs}
Let~$\sqc$ and~$\sqc'$ be reduced expressions of two Coxeter elements.
Let~$i,j$ be positions in~$\Qc$ and $i',j'$ be positions in~$\Q_{\sqc'}$ be such~${\ti = \ti'}$ and~${\tj = \tj'}$.
Then the coefficients~$\vcoeff[i][j][\sqc]$ and~$\vcoeff[i'][j'][\sqc']$, computed in~$\Qc$ and~$\Q_{\sqc'}$ respectively, coincide.
\end{proposition}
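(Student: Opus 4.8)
\textbf{Proof plan for Proposition~\ref{prop:coeffs}.}

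The plan is to reduce the independence of the coefficients $\vcoeff[i][j][\sqc]$ from the fundamental domain $\Qc$ to the already-established independence of the $d$-vectors themselves. The crucial observation is that by Corollary~\ref{thm:d-vector}, the coefficient $\vcoeff[i][j][\sqc]$ equals the $i$-component of the $d$-vector $\dvector_\sqc(I,j)$, computed in any $\sqc$-cluster $I$ containing $i$. In turn, by definition this $d$-vector is $\dvector\big(\posToVarc(I),\posToVarc(j)\big)$, and by Definition~\ref{def:comp-var} together with Lemma~\ref{lem:comp-denominator}, its $\posToVarc(i)$-component is exactly the compatibility degree $d\big(\posToVarc(i),\posToVarc(j)\big)$ of cluster variables, which depends only on the two cluster variables involved and not on any choice of cluster. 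So the whole problem becomes: show that the two reduced expressions $\sqc$ and $\sqc'$, when restricted to representatives of the same orbits, pick out the same cluster variables.

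First I would make precise the identification of orbits with cluster variables. By the discussion preceding the proposition, both fundamental domains $\Qc$ and $\Q_{\sqc'}$ embed in the common bi-infinite word $\tQ$ (up to commutations of consecutive commuting letters), and each provides a bijection $\posToVarc$ (respectively $\posToVar_{\sqc'}$) from its positions to cluster variables. The content of the bijection $\orbToVar$ from orbits to cluster variables, together with the factorization $\posToVarc = \orbToVar \circ (\text{orbit map})$, is precisely that these position-to-variable bijections factor through the orbit map $i \mapsto \ti$. Thus the hypotheses $\ti = \ti'$ and $\tj = \tj'$ give
\begin{equation*}
\posToVarc(i) = \posToVar_{\sqc'}(i') \quad\text{and}\quad \posToVarc(j) = \posToVar_{\sqc'}(j').
\end{equation*}

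From here the conclusion is immediate: applying the chain of identifications above in both fundamental domains yields
\begin{equation*}
\vcoeff[i][j][\sqc] = d\big(\posToVarc(i),\posToVarc(j)\big) = d\big(\posToVar_{\sqc'}(i'),\posToVar_{\sqc'}(j')\big) = \vcoeff[i'][j'][\sqc'],
\end{equation*}
where the two outer equalities are Corollary~\ref{thm:d-vector} combined with Definition~\ref{def:comp-var}, and the middle equality uses the orbit identifications just established. I expect the main obstacle to be the second step, namely rigorously justifying that $\posToVarc$ and $\posToVar_{\sqc'}$ agree on common orbits. This requires carefully tracking the commutation moves relating $\Qc$ and $\Q_{\sqc'}$ inside $\tQ$ and verifying that the translation $\translation : i \mapsto i+m$ (which conjugates letters by $\wo$) is compatible with the rotation map $\tausqc$ and hence with the cluster-variable labeling; this is exactly the content the excerpt defers to Lemmas~\ref{lem:rotation1}, \ref{lem:cluster-jumping} and~\ref{lem:compatibility-rotation}. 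Once the well-definedness of $\orbToVar$ on orbits is in hand, the numerical identity of the coefficients is a one-line consequence, so essentially all the work lives in the orbit bookkeeping rather than in any new computation.
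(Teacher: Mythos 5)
Your outer equalities are fine: by Theorem~\ref{thm:three-compatibilities-coincide} (equivalently, Corollary~\ref{thm:d-vector} combined with Lemma~\ref{lem:comp-denominator} and Definition~\ref{def:comp-var}) one has $\vcoeff[i][j][\sqc] = d(\posToVarc(i),\posToVarc(j))$ and similarly in $\Q_{\sqc'}$. The genuine gap is your middle equality $\posToVarc(i)=\posToVar_{\sqc'}(i')$, \ie the well-definedness of~$\orbToVar$. This is not an available prior result: it is itself one of the unproved assertions of the same section, stated in the same breath as Proposition~\ref{prop:coeffs}, and the three lemmas you defer it to do not deliver it. Lemma~\ref{lem:rotation1} is a statement about words; Lemma~\ref{lem:cluster-jumping} only says that the jump~$\sigma$ preserves which \emph{position sets} are clusters --- an isomorphism of subword complexes, which does not identify the actual cluster variable sitting at a given position across the two fundamental domains; and Lemma~\ref{lem:compatibility-rotation} is a statement about the coefficients $\vcoeff[\cdot][\cdot][\sqc]$ themselves --- it \emph{is} the single-jump case of the proposition you are trying to prove. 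So your reduction is circular, or at best inverts the paper's logical order: in the paper, Proposition~\ref{prop:coeffs} is proved directly by iterating Lemma~\ref{lem:compatibility-rotation} along a sequence of initial-letter jumps $c\mapsto scs$ (every Coxeter element is reachable from any other by such moves, and the jumps connect $\Qc$ to $\Q_{\sqc'}$ inside $\tQ$ while tracking the orbit identification, exactly as in the proof of Proposition~\ref{coro:compatibility-coef}), together with the easy invariance of the root function under commutations of consecutive commuting letters; the bijection~$\orbToVar$ and its compatibility with the maps~$\posToVarc$ are then \emph{consequences} of this coefficient invariance, not inputs to it.

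One can in fact repair your middle step, but doing so shows the detour buys nothing. For a single jump one would check that $\sigma$ sends the seed positions $[n]$ of~$\Qc$ to positions of~$\Q_{\sqc'}$ carrying the same cluster $X_c$: positions $2,\dots,n$ go to initial positions of~$\Q_{\sqc'}$, whose variables lie in $X_{c'}=\mu_s(X_c)$ and coincide with $x_{c_2},\dots,x_{c_n}$, while position~$1$ goes to the last position of~$\Q_{\sqc'}$, whose root is $\wo\eta(s)(\alpha_{\eta(s)})=\alpha_s$, matching the variable $x_s$, which has $d$-vector $\alpha_s$ with respect to $\mu_s(X_c)$. One then compares the $d$-vectors of $\posToVarc(i)$ and $\posToVar_{\sqc'}(\sigma(i))$ with respect to this common cluster via Corollary~\ref{thm:d-vector} in both domains --- and the needed equality of the two coefficient vectors is precisely Lemma~\ref{lem:compatibility-rotation} again. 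In other words, any rigorous version of your key step already contains the paper's direct proof, so the cluster-variable identity should be derived \emph{from} Proposition~\ref{prop:coeffs}, not used to establish it.
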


For any orbits~$\ti$ and~$\tj$ of~$\tQ$, we can therefore define with no ambiguity the coefficient~$\vcoeff[\ti][\tj]$ to be the coefficient~$\vcoeff[i][j][\sqc]$ for~$i$ and~$j$ representatives of~$\ti$ and~$\tj$ in any arbitrary fundamental domain~$\Qc$. The $d$-vectors of the cluster algebra can then be expressed from these coefficients.

\begin{theorem}
Let~$\tI$ be a collection of orbits of positions in~$\tQ$ forming a cluster in~$\tQ$, and let~$\tj$ be an orbit of positions in~$\tQ$. Then the $d$-vector~$\dvector(\tI,\tj)$ of the cluster variable~$\orbToVar(\tj)$ with respect to the initial cluster seed~$\orbToVar(\tI)$ is given by
$$\dvector(\tI,\tj) = (\vcoeff[\ti][\tj])_{\ti \in \tI}.$$
\end{theorem}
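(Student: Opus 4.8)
The plan is to reduce the statement entirely to Corollary~\ref{thm:d-vector} by choosing a single fundamental domain and invoking the two preceding propositions to eliminate the dependence on~$\sqc$. The key observation is that the quantities on both sides of the desired identity---the $d$-vector~$\dvector(\tI,\tj)$ and the coefficients~$\vcoeff[\ti][\tj]$---have already been \emph{defined} in terms of arbitrary representatives in a fundamental domain, and that these definitions are justified to be well-posed precisely by the propositions just stated. So the real content is to verify that, once representatives are fixed, the identity is exactly Corollary~\ref{thm:d-vector}.

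First I would fix a Coxeter element~$c$ with reduced expression~$\sqc$, which singles out the fundamental domain~$\Qc$ inside~$\tQ$. Via the bijection between orbits and positions in~$\Qc$, the cluster~$\tI$ corresponds to a subset~$I \subseteq [m]$ and the orbit~$\tj$ to a position~$j \in [m]$, where~$I$ is a $\sqc$-cluster in~$\Qc$ (this uses the Proposition asserting that the orbit forms a cluster iff its representatives form a $\sqc$-cluster). By the defining equation~$\orbToVar(\tI) = \posToVarc(I)$ and~$\orbToVar(\tj) = \posToVarc(j)$, we have
$$\dvector(\tI,\tj) = \dvector\big(\orbToVar(\tI),\orbToVar(\tj)\big) = \dvector\big(\posToVarc(I),\posToVarc(j)\big) = \dvector_\sqc(I,j).$$

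Next I would apply Corollary~\ref{thm:d-vector} directly to the $\sqc$-cluster~$I$ and position~$j$, which gives~$\dvector_\sqc(I,j) = (\vcoeff[i][j][\sqc])_{i\in I}$. It then remains only to match indices: by the definition of~$\vcoeff[\ti][\tj]$ as~$\vcoeff[i][j][\sqc]$ for representatives~$i,j$ in~$\Qc$, the tuple~$(\vcoeff[i][j][\sqc])_{i\in I}$ is exactly~$(\vcoeff[\ti][\tj])_{\ti \in \tI}$ under the correspondence~$i \leftrightarrow \ti$. Chaining these equalities yields~$\dvector(\tI,\tj) = (\vcoeff[\ti][\tj])_{\ti\in\tI}$, as claimed.

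I do not expect a genuine obstacle here: the theorem is a repackaging of Corollary~\ref{thm:d-vector} into the $\sqc$-independent language of orbits, and the only thing that could go wrong is a failure of well-definedness, which is exactly what Proposition~\ref{prop:coeffs} (independence of the coefficients from the fundamental domain) and the preceding clustering proposition guarantee. The mild subtlety worth checking carefully is that the \emph{same} choice of fundamental domain~$\Qc$ is used simultaneously for the cluster~$I$, the position~$j$, and the coefficients, so that the index set~$\tI$ on the left matches the index set on the right under one coherent choice of representatives; once a single~$\sqc$ is fixed throughout, this is automatic, and the propositions ensure the final answer is independent of that choice.
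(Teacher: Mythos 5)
Your proof is correct and takes exactly the route the paper intends: the paper omits the proof of this theorem but states that it follows from the preceding propositions of that section together with Corollary~\ref{thm:d-vector}, which is precisely your reduction --- fix a single fundamental domain~$\Qc$, identify~$\tI$ and~$\tj$ with a $\sqc$-cluster~$I$ and a position~$j$, apply Corollary~\ref{thm:d-vector}, and invoke Proposition~\ref{prop:coeffs} and the cluster-invariance proposition to remove the dependence on the choice of~$\sqc$. Your closing observation that one coherent choice of fundamental domain must be used simultaneously for~$I$, $j$, and the coefficients is the correct subtlety, and it is handled exactly as you describe.
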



\section{Proofs of Lemma~\ref{lem:comp-denominator} and Lemma~\ref{lem:indep1}}
\label{sec:proof}


\subsection{Proof of Lemma~\ref{lem:comp-denominator}}
\label{subsec:proof2}

For cluster algebras of finite type all clusters containing the cluster variable $x_i$ are connected under mutations. Therefore, it is enough to prove the lemma for the cluster 
$X'=\{x_1,\dots , x_{j-1} , x_j', x_{j+1},\dots , x_n\}$  obtained from $X=\{x_1,\dots ,x_n\}$ by mutating a variable $x_j$ with $j\neq i$. The variables $x_j$ and $x_j'$ satisfy a relation 
$$ x_j = \frac{P(x_1, \dots , \widehat x_j, \dots , x_n)}{x_j'}$$
for a polynomial $P$ in the variables $\{x_1, \dots , \widehat x_j, \dots , x_n \}$, where~$\widehat x_j$ means that we skip variable~$x_j$. Replacing $x_j$ in equation~\eqref{eq:cluster_variable_y} we obtain 
$$ y = \frac{\widetilde P(x_1,\dots, x_j' ,\dots,,x_n)  /  {x_j'}^{m_j}}
{x_1^{d_1} \cdots \widehat {x_j^{d_j}} \cdots x_n^{d_n}  P(x_1, \dots , \widehat x_j, \dots , x_n)   /  {x_j'}^{d_j} } $$
where $m_j$ is a non-negative integer and $\widetilde P$ is a polynomial which is not divisible by $x_j'$ and by any $x_\ell$ with $\ell\neq j$.
The Laurent phenomenon implies that $P$ divides $\widetilde P$. Thus , we obtain 
$$ y = \frac{\widetilde F(x_1,\dots, x_j' , \dots ,x_n)}{x_1^{d_1} \cdots \widehat {x_j^{d_j}} \cdots x_n^{d_n}} \cdot
\frac{{x_j'}^{d_j}}{{x_j'}^{m_j}}
$$
is the rational expression of the cluster variable $y$ expressed in terms of the variables of $X'$.
As a consequence, the $d$-vectors $\dvector(X,y)$ and $\dvector(X',y)$ differ only in the $j$-th coordinate. In particular, the $i$-th coordinate remains constant after mutation of any $j\neq i$ as desired.


\subsection{Proof of Lemma~\ref{lem:indep1}}
\label{subsec:proof1}

Before proving this lemma we need some preliminaries on subword complexes.
Recall that $I\subset [m]$ is a $\sqc$-cluster of~$\Qc$ if and only if the subword of $\Qc$ with positions at the complement of $I$ is a reduced expression of $\wo$ (see Theorem~\ref{thm:CLS_cluster_complexes}). 

\begin{lemma}[{\cite[Lemma~3.3]{CeballosLabbeStump}}]
\label{lem:cluster_flip}
Let $I \subset [m]$ be a $\sqc$-cluster of $\Qc$. Then, 
\begin{enumerate}[(i)]
\item For every $i\in I$ there exist a unique $j\notin I$ such that $I \symdif \{i,j\}$ is again a $\sqc$-cluster, where~$A \symdif B \eqdef (A \cup B) \ssm (A \cap B)$ denotes the symmetric difference.
\item This $j$ is the unique $j\notin I$ satisfying $\Root{I}{j}=\pm \Root{I}{i}$.
\end{enumerate}
\end{lemma}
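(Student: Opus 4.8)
The plan is to reduce both statements to a single equivalence and then read everything off the root function. Throughout, write $C \eqdef [m] \ssm I$; by Theorem~\ref{thm:CLS_cluster_complexes}, $C$ is a reduced expression for $\wo$, of length $N = \ell(\wo)$.

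First I would record the behaviour of the root function on $C$. For $j \in C$, the value $\Root{I}{j} = \wordprod{\sqc}{[j-1]\ssm I}(\alpha_{q_j})$ is exactly the inversion root contributed by the $j$-th letter of the reduced word $C$; since $C$ spells the longest element $\wo$, whose inversion set is all of $\Phi^+$, the assignment $j \mapsto \Root{I}{j}$ is a bijection from $C$ onto $\Phi^+$. Consequently, for any $i \in I$ the root $\Root{I}{i}$ is nonzero, so exactly one of $\Root{I}{i}$ and $-\Root{I}{i}$ is positive, and there is precisely one $j \notin I$ with $\Root{I}{j} = \pm\Root{I}{i}$. This is where it matters that the target is $\wo$: it is what guarantees that every $i \in I$ has such a partner, hence is flippable.

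Next I would prove the core equivalence: for $j \in C$, the set $I \symdif \{i,j\}$ is a $\sqc$-cluster if and only if $\Root{I}{j} = \pm\Root{I}{i}$. By Theorem~\ref{thm:CLS_cluster_complexes}, $I \symdif \{i,j\} = (I \ssm \{i\}) \cup \{j\}$ is a $\sqc$-cluster precisely when its complement $(C \cup \{i\}) \ssm \{j\}$ is a reduced expression for $\wo$; but this word has length $N = \ell(\wo)$, so being reduced for $\wo$ is equivalent to merely spelling $\wo$. It therefore suffices to compute a product. Assuming $i < j$ (the case $i > j$ being symmetric), factor the word $C \cup \{i\}$ in position order as $P\, q_i\, M\, q_j\, S$, where $P \eqdef \wordprod{\sqc}{[i-1]\ssm I}$ and $M$ is the product of the letters of $C$ lying strictly between positions $i$ and $j$. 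Since $C = P\,M\,q_j\,S$ spells $\wo$, deleting position $j$ yields $P\,q_i\,M\,S$, which spells $\wo$ if and only if $q_i M = M q_j$; as $q_i$ and $M q_j M^{-1}$ are reflections, this holds if and only if $M(\alpha_{q_j}) = \pm \alpha_{q_i}$. Applying $P$ and using $\Root{I}{i} = P(\alpha_{q_i})$ together with $\Root{I}{j} = P M(\alpha_{q_j})$ from Definition~\ref{def:rootFunction}, this is exactly the condition $\Root{I}{j} = \pm\Root{I}{i}$.

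Combining the two steps settles both parts simultaneously: the unique $j \notin I$ with $\Root{I}{j} = \pm\Root{I}{i}$ produced in the first step is, by the equivalence of the second step, the unique $j \notin I$ for which $I \symdif \{i,j\}$ is again a $\sqc$-cluster. I expect the main obstacle to be the bookkeeping in the product computation: keeping the prefix $P$ and the intermediate factor $M$ straight, justifying the reflection identity $q_i = M q_j M^{-1}$, and treating the two-case split according to the relative order of $i$ and $j$, together with the clean observation that a word of length $\ell(\wo)$ spelling $\wo$ is automatically reduced. The conceptual input—that the inversions of a reduced word for $\wo$ exhaust $\Phi^+$—is standard; the genuine work lies in the faithful translation between the subword condition and the root-function condition.
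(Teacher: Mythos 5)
Your proof is correct, but a comparison with ``the paper's proof'' is moot here: the paper does not prove this lemma at all, it imports it verbatim from~\cite[Lemma~3.3]{CeballosLabbeStump}. What you have produced is a correct, self-contained reconstruction of the argument underlying that citation. Both of your key steps check out. First, since the complement~$C = [m] \ssm I$ is a reduced word for~$\wo$ and the inversion set of~$\wo$ is all of~$\Phi^+$, the assignment~$j \mapsto \Root{I}{j}$ is indeed a bijection from~$C$ onto~$\Phi^+$ (and~$|C| = \ell(\wo) = |\Phi^+|$), so for each~$i \in I$ there is exactly one~$j \notin I$ with~$\Root{I}{j} = \pm\Root{I}{i}$. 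Second, your cancellation computation is sound: with~$C \cup \{i\}$ factored as~$P\,q_i\,M\,q_j\,S$, the deleted word spells~$\wo$ iff~$P\,q_i\,M\,S = P\,M\,q_j\,S$, iff~$q_i = M q_j M^{-1}$, iff~$M(\alpha_{q_j}) = \pm\alpha_{q_i}$, which after applying~$P$ is exactly~$\Root{I}{j} = \pm\Root{I}{i}$; and the observation that a length-$\ell(\wo)$ expression for~$\wo$ is automatically reduced is precisely what lets you replace ``reduced expression for~$\wo$'' by ``spells~$\wo$.'' Combining the two steps does settle (i) and (ii) simultaneously, and this is essentially how the cited source (building on Knutson--Miller's exchange property for subword complexes) proceeds.

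One caveat worth writing out explicitly: the case~$i > j$ is not literally symmetric. There the prefix computing~$\Root{I}{i}$ \emph{contains} the letter~$q_j$, since~$j \in [i-1] \ssm I$; so with~$C \cup \{i\}$ factored as~$P'\,q_j\,M\,q_i\,S$ one gets~$\Root{I}{i} = P' q_j M(\alpha_{q_i})$ while~$\Root{I}{j} = P'(\alpha_{q_j})$, and the word condition reads~$M(\alpha_{q_i}) = \pm\alpha_{q_j}$. The extra reflection~$q_j$ flips a sign (as~$q_j(\alpha_{q_j}) = -\alpha_{q_j}$), which is harmless because the whole equivalence is stated up to sign --- but copying the~$i < j$ factorization verbatim would misidentify~$\Root{I}{i}$, so this half-case deserves its own two lines rather than the word ``symmetric.''
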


This exchange operation between $\sqc$-clusters is called \defn{flip}. It correspond to mutations between clusters in the cluster algebra. During the flip, the root function is updated as follows.

\begin{lemma}[{\cite[Lemma~3.6]{CeballosLabbeStump}}]
\label{lem:cluster_update}
Let $I$ and $J$ be two adjacent $\sqc$-clusters of $\Qc$ with~$I \ssm i = J \ssm j$, and assume that $i<j$. Then, for every $k\in [m]$,
$$
\Root{I'}{k}=
\begin{cases}
	t_i(\Root{I}{k}) &\text{ if } i<k\leq j, \\
	\Root{I}{k} &\text{ otherwise } \\
\end{cases}
$$
Here, $t_i = w q_iw^{-1}$ where $w$ is the product of the reflections $q_x\in \Qc$ for $x\in [i-1]\ssm I$. 
By construction, $t_i$ is the reflection in $W$ orthogonal to the root $\Root{I}{i}=w(\alpha_{q_i})$.
\end{lemma}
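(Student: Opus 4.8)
The plan is to prove the statement by a direct computation with the defining formula for the root function, exploiting the fact that $I$ and $J$ differ in exactly one position. Write $J = (I \ssm \{i\}) \cup \{j\}$ with $i < j$, so that $i \in I$, $i \notin J$, $j \notin I$, $j \in J$, and for every other position the membership in $I$ and in $J$ agrees. Consequently the only ways in which the complements $[k-1] \ssm I$ and $[k-1] \ssm J$ can differ are by the presence of $i$ and of $j$, and this depends only on whether $i$ and $j$ lie below $k$. This naturally splits the argument into three ranges: $k \le i$, $\; i < k \le j$, and $k > j$. In each range I would compare the two ordered products $\wordprod{\sqc}{[k-1] \ssm I}$ and $\wordprod{\sqc}{[k-1] \ssm J}$ appearing in $\Root{I}{k}$ and $\Root{J}{k}$.

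First I would set $w \eqdef \wordprod{\sqc}{[i-1] \ssm I}$ and let $u \eqdef \wordprod{\sqc}{\{x : i < x < j\} \ssm I}$ be the product over the complement positions strictly between $i$ and $j$. For $k \le i$ neither $i$ nor $j$ lies in $[k-1]$, so the two complements coincide and $\Root{J}{k} = \Root{I}{k}$, which is the ``otherwise'' case. For $i < k \le j$ we have $i \in [k-1]$ but $j \notin [k-1]$, so $[k-1] \ssm J$ is exactly $[k-1] \ssm I$ with the letter $q_i$ inserted at the slot of $i$; since everything before that slot multiplies out to $w$, factoring gives $\wordprod{\sqc}{[k-1] \ssm J} = w\,q_i\,w^{-1}\,\wordprod{\sqc}{[k-1] \ssm I}$, hence $\Root{J}{k} = t_i \Root{I}{k}$ with $t_i = w q_i w^{-1}$. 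The identity $t_i = w q_i w^{-1}$ is the reflection orthogonal to $\Root{I}{i} = w(\alpha_{q_i})$, as claimed, because conjugating the reflection $q_i = s_{\alpha_{q_i}}$ by $w$ gives the reflection along $w(\alpha_{q_i})$.

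The delicate case is $k > j$, where both $i$ and $j$ lie in $[k-1]$, so passing from $I$ to $J$ simultaneously inserts $q_i$ and deletes $q_j$; the two products are then genuinely different words and I must show they still act identically on $\alpha_{q_k}$. Here I would invoke the flip characterization of Lemma~\ref{lem:cluster_flip}(ii): since $I \symdif \{i,j\}$ is again a $\sqc$-cluster, we have $\Root{I}{j} = \pm \Root{I}{i}$, that is $wu(\alpha_{q_j}) = \pm\, w(\alpha_{q_i})$, which after cancelling $w$ yields $u(\alpha_{q_j}) = \pm\, \alpha_{q_i}$. Because reflections depend only on the line spanned by their root, this forces the commutation relation $u\,q_j\,u^{-1} = q_i$, i.e. $q_i\,u = u\,q_j$. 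Writing both products with their common tail over $\{x : j < x \le k-1\} \ssm I$, the word for $J$ reads $w\,q_i\,u\,(\text{tail})$ and the word for $I$ reads $w\,u\,q_j\,(\text{tail})$; the relation $q_i u = u q_j$ identifies them, so $\Root{J}{k} = \Root{I}{k}$. I expect this extraction of the commutation relation from the flip property to be the main obstacle, since it is the only step where the cluster structure (rather than bookkeeping of complements) is used; the remaining two cases are routine once the decomposition of $[k-1] \ssm I$ along $i$ and $j$ is fixed.
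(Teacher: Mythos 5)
Your proof is correct and complete: the three-range case analysis on $k$, the factorization $\wordprod{\sqc}{[k-1]\ssm J} = t_i\,\wordprod{\sqc}{[k-1]\ssm I}$ for $i<k\le j$, and the key step for $k>j$ --- extracting $q_i u = u q_j$ from $\Root{I}{j}=\pm\Root{I}{i}$ via Lemma~\ref{lem:cluster_flip}(ii) and the conjugation rule $u s_{\alpha} u^{-1} = s_{u(\alpha)}$ --- are all sound. Note that the paper itself gives no proof of this lemma (it is quoted from \cite[Lemma~3.6]{CeballosLabbeStump}), and your argument is essentially the same direct computation as in that source, so there is nothing further to compare.
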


This result implies the following Lemma.

\begin{lemma} \label{lem:flip_root_function}
Let $I$ and $J$ be two adjacent $\sqc$-clusters of $\Qc$ with~$I \ssm i = J \ssm j$. Then, for every $k \in [m]$,
$$\Root{J}{k}= \Root{I}{k} + a_k \Root{I}{i}$$
for some constant~$a_k \in \R$.
\end{lemma}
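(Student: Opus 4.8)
The plan is to read off $\Root{J}{k}$ from Lemma~\ref{lem:cluster_update} and verify that the difference $\Root{J}{k} - \Root{I}{k}$ is always a scalar multiple of the fixed root $\Root{I}{i}$. Here $I$ and $J$ are adjacent $\sqc$-clusters with $I \ssm i = J \ssm j$; to apply Lemma~\ref{lem:cluster_update} directly I first assume $i<j$ (the case $i>j$ is symmetric by exchanging the roles of $I$ and $J$, since the statement of the present lemma is symmetric in $I$ and $J$ while Lemma~\ref{lem:cluster_update} requires the smaller index to come from the first cluster).

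First I would recall that, by Lemma~\ref{lem:cluster_update}, for every $k \in [m]$ one has $\Root{J}{k} = t_i(\Root{I}{k})$ when $i < k \le j$, and $\Root{J}{k} = \Root{I}{k}$ otherwise. In the second (``otherwise'') case there is nothing to prove: we simply take $a_k = 0$, so that $\Root{J}{k} = \Root{I}{k} + 0 \cdot \Root{I}{i}$. The content of the lemma therefore lies entirely in the range $i < k \le j$, where I must understand the action of the reflection $t_i$.

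The key step is the following elementary fact about reflections. The reflection $t_i$ is, by the last sentence of Lemma~\ref{lem:cluster_update}, the orthogonal reflection fixing the hyperplane orthogonal to the root $\Root{I}{i}$; thus for any vector $v \in \R^n$ we have
$$ t_i(v) = v - \frac{2\langle v, \Root{I}{i}\rangle}{\langle \Root{I}{i}, \Root{I}{i}\rangle}\, \Root{I}{i}. $$
Applying this with $v = \Root{I}{k}$ immediately gives $\Root{J}{k} = t_i(\Root{I}{k}) = \Root{I}{k} + a_k \Root{I}{i}$, where the scalar is
$$ a_k = -\frac{2\langle \Root{I}{k}, \Root{I}{i}\rangle}{\langle \Root{I}{i}, \Root{I}{i}\rangle} \in \R. $$
This covers the remaining case and completes the proof.

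I do not anticipate a genuine obstacle here: the result is essentially a one-line consequence of Lemma~\ref{lem:cluster_update} together with the defining formula for a reflection. The only point requiring a modicum of care is the bookkeeping of the hypothesis $i<j$ versus $i>j$, so that Lemma~\ref{lem:cluster_update} is invoked in the direction in which it is stated; since the conclusion is symmetric in $I$ and $J$, one may assume $i<j$ without loss of generality. It is worth noting that the lemma makes no claim on the value or sign of $a_k$ (in particular $a_k$ need not be an integer in this purely linear-algebraic formulation), which is precisely why the proof reduces to the reflection identity above.
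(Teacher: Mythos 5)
Your proof is correct and follows essentially the same route as the paper, which presents the lemma as an immediate consequence of Lemma~\ref{lem:cluster_update} together with the observation that $t_i$, being the reflection orthogonal to $\Root{I}{i}$, moves any vector by a scalar multiple of $\Root{I}{i}$. The only point you gloss over slightly is the reduction to $i<j$: the statement is symmetric in $I$ and $J$ only because $\Root{J}{j} = \Root{I}{j} = \pm\Root{I}{i}$ (the first equality from the ``otherwise'' case of Lemma~\ref{lem:cluster_update} applied with the roles of the two clusters swapped, the second from part~(ii) of Lemma~\ref{lem:cluster_flip}), a one-line check that does not affect the argument.
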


Using the previous lemma, we now derive Lemma~\ref{lem:indep1}.

\begin{proof}[Proof of Lemma~\ref{lem:indep1}]
This result is equivalent to prove that the coefficients $\rho_i(j)$ from Definition~\ref{def:comp-pos} are independent of the $\sqc$-cluster $I\subset [m]$ of $\Qc$ containing $i$. Since all the $\sqc$-clusters containing~$i$ are connected by flips, it is enough to prove that $\rho_i(j)$ is preserved by any flip not involving $i$. Let $i\in I$ and let $i' \in I \ssm i$. Then, $i'$ can be exchanged with a unique $j'\notin I$ such that $I'=I \symdif \{i',j'\}$ is again a $\sqc$-cluster. By Lemma~\ref{lem:flip_root_function} and part~(ii) of Lemma~\ref{lem:cluster_flip}, 
$$\Root{I}{j} = \sum_{\ell \in I} \rho_\ell(j) \Root{I}{\ell}.$$
implies
$$\Root{I'}{j} = \sum_{\ell \in I \ssm i'} \rho_\ell(j) \Root{I'}{\ell} +  a \, \Root{I}{j'}$$
for some constant~$a\in \R$. In particular, this implies that the coefficients~$\rho_i(j)$ are the same for~$I$ and~$I'$.
\end{proof}


\section{Proof of Theorem~\ref{thm:three-compatibilities-coincide}}
\label{sec:main-proof}

Our proof of Theorem~\ref{thm:three-compatibilities-coincide} is based in Proposition~\ref{coro:compatibility-denom} and Proposition~\ref{coro:compatibility-coef}. These propositions are stated and proved in Section~\ref{sec:mainproofpart1} and Section~\ref{sec:mainproofpart2} respectively. 


\subsection{The map~$d(\cdot,\cdot)$ satisfies relations~\eqref{compatibility-relation1} and~\eqref{compatibility-relation2}}
\label{sec:mainproofpart1}

In this section we show that the map~$d(\cdot,\cdot)$ induces a map on the set of almost positive roots which satisfy the properties~\eqref{compatibility-relation1} and~\eqref{compatibility-relation2} in the definition of the $c$-compatibility degree among almost positive roots (Definition~\ref{def:comp-roots}). Before stating this result in Proposition~\ref{coro:compatibility-denom} we need the following lemma.

\begin{lemma}
\label{lem:var-rotation}
Let $X=\{x_1,\dots ,x_n\}$ be a cluster and let $y$ be a cluster variable of~$\cA$. Then the rational function of $y$ expressed in terms of the variables in $X$ is exactly the same as the rational function of $\tau y$ in terms of the variables in $\tau X$. In particular, 
$$ \dvector(X,y) = \dvector(\tau X, \tau y).$$
\end{lemma}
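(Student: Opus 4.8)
The plan is to prove Lemma~\ref{lem:var-rotation} by unwinding the purely algebraic characterization of the rotation map~$\tau$ on cluster variables given in Definition~\ref{def:rotation_map}, and showing that the entire mutation history is shifted in a way that preserves rational expressions. Recall that~$\tau$ was defined via the initial seed~$X_c$ attached to the quiver~$\weightedQuiverc$: if~$y$ is the variable sitting at vertex~$i$ of the quiver reached from~$\weightedQuiverc$ by the mutation sequence~$\mu_{i_1} \to \dots \to \mu_{i_r}$, then~$\tau(y)$ is the variable at vertex~$i$ of the quiver reached by the \emph{prepended} sequence~$\mu_{c_1} \to \dots \to \mu_{c_n} \to \mu_{i_1} \to \dots \to \mu_{i_r}$. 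The key structural fact I would invoke is that the quiver~$\weightedQuiverc$ is invariant under the full round of mutations~$\mu_{c_1} \to \dots \to \mu_{c_n}$ in the order prescribed by~$\sqc$; this is the standard statement that a Coxeter-sorted sequence of mutations returns the quiver to itself (it is the combinatorial incarnation of the fact that~$\posToRootsc$ and~$\tausqc$ are well-defined and that~$\tau$ is a symmetry of the cluster structure, compatible with the preceding Lemma asserting that~$\tau$ preserves clusters).

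First I would set up the bookkeeping. Write~$\Sigma_0 = X_c$ for the initial seed and let~$\Sigma = \mu_{i_r} \circ \dots \circ \mu_{i_1}(\Sigma_0)$ be the seed containing~$y$ at vertex~$i$, so that~$X = \Sigma$ as a cluster and~$y$ is its~$i$-th variable. Applying the prepended round, set~$\Sigma' = \mu_{i_r} \circ \dots \circ \mu_{i_1} \circ \mu_{c_n} \circ \dots \circ \mu_{c_1}(\Sigma_0)$; by definition~$\tau X = \Sigma'$ (as a cluster) and~$\tau y$ is its~$i$-th variable. The crucial observation is that because the~$\sqc$-round fixes the quiver~$\weightedQuiverc$, the seed~$\Sigma'' \eqdef \mu_{c_n} \circ \dots \circ \mu_{c_1}(\Sigma_0)$ carries exactly the same quiver as~$\Sigma_0$. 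Therefore the two mutation sequences~$\mu_{i_1} \to \dots \to \mu_{i_r}$ applied to~$\Sigma_0$ and to~$\Sigma''$ are governed by \emph{identical quiver data at every step}, since the quiver evolution depends only on the starting quiver (which coincides) and on the indices mutated (which are the same). Consequently the exchange relations, viewed as rational functions of the respective cluster variables, are formally identical: the Laurent expression of~$y$ in the variables of~$\Sigma = X$ is the very same rational expression as that of~$\tau y$ in the variables of~$\Sigma' = \tau X$. This identity of rational expressions immediately gives the stated equality of denominator vectors, $\dvector(X,y) = \dvector(\tau X, \tau y)$, since the~$d$-vector is read off from the denominator monomial, which is determined by the rational expression alone.

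I would make the ``identical rational expression'' claim precise by an induction on~$r$. The base case~$r=0$ asserts that the~$i$-th variable of~$\Sigma_0$ and the~$i$-th variable of~$\Sigma''$ have the same expression in terms of~$X_c$ and~$\tau X_c = \Sigma''$ respectively, which is trivially the variable~$x_i^{\Sigma_0}$ versus~$x_i^{\Sigma''}$, each being the respective ``identity'' expression; here one uses that the quivers of~$\Sigma_0$ and~$\Sigma''$ agree so that the two seeds are genuinely parallel. For the inductive step, suppose the expressions of all variables of~$\Sigma_t \eqdef \mu_{i_t}\circ\dots\circ\mu_{i_1}(\Sigma_0)$ in~$X$ match those of~$\Sigma_t' \eqdef \mu_{i_t}\circ\dots\circ\mu_{i_1}(\Sigma'')$ in~$\tau X$; since~$\Sigma_t$ and~$\Sigma_t'$ carry the same quiver, mutating both at index~$i_{t+1}$ applies the same exchange relation~$y y' = M_+ + M_-$ with the same monomial data, so the newly produced variable inherits the matching expression.

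The main obstacle I anticipate is establishing rigorously the quiver-invariance~$\Sigma''$-versus-$\Sigma_0$ claim, i.e.\ that the Coxeter-sorted round~$\mu_{c_1}\to\dots\to\mu_{c_n}$ returns~$\weightedQuiverc$ to itself, and connecting it cleanly to the definition of~$\tau$ so that ``same quiver, same mutation indices'' indeed forces ``same rational expression.'' This is where I would lean on the literature (the cited~\cite{CeballosLabbeStump} and~\cite{Keller}): the invariance of~$\weightedQuiverc$ under the sorted round is exactly what makes the rotation~$\tau$ in Definition~\ref{def:rotation_map} well-defined and independent of~$c$, so it can legitimately be assumed here. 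Once that structural input is granted, the remainder is the routine parallel-induction above, and the passage from equal rational functions to equal~$d$-vectors is immediate.
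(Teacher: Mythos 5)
Your parallel-mutation argument is sound as far as it goes, and it is essentially the first half of the paper's proof: the $\sqc$-round of mutations $\mu_{c_1},\dots,\mu_{c_n}$ fixes the quiver~$\weightedQuiverc$, so running any mutation sequence from $(\weightedQuiverc, X_c)$ and from $(\weightedQuiverc, \tau X_c)$ is governed by identical quiver data at every step, whence $y(X_c)=\tau y(\tau X_c)$ as rational functions for every cluster variable~$y$. But note what this actually proves: equality of expressions in terms of the \emph{initial acyclic seed}~$X_c$ and its rotation. Your bookkeeping obscures this by setting ``$X=\Sigma$, the seed containing $y$ at vertex~$i$'': read literally, the expression of $y$ in the variables of its own cluster is just the coordinate function, so your displayed conclusion is trivially true and is not the lemma; read charitably (and consistently with your base case, which is phrased in terms of $X_c$ and $\tau X_c$), your induction establishes the lemma only in the special case $X=X_c$. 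The lemma, however, asserts $y(X)=\tau y(\tau X)$ for an \emph{arbitrary} cluster~$X$ --- in particular for non-acyclic seeds, which never arise as $X_c$ for any Coxeter element and which are precisely the seeds this paper is after. Nothing in your proposal bridges this: ``same quiver, same mutation indices'' yields parallel expressions in the variables of the two \emph{starting} seeds, and your starting seeds are always $X_c$ and $\tau X_c$.

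The paper closes exactly this gap with a short change-of-cluster argument that your proposal lacks: writing $y(X)$ for the expression of $y$ in the variables of~$X$, one has $y(X)=y(X_c)\circ X_c(X)$; the first part gives $y(X_c)=\tau y(\tau X_c)$, and, applied to each variable of~$X$, also $X(X_c)=\tau X(\tau X_c)$, so inverting the substitution gives $X_c(X)=\tau X_c(\tau X)$; composing yields $y(X)=\tau y(\tau X_c)\circ\tau X_c(\tau X)=\tau y(\tau X)$. Alternatively you could repair your argument internally: since $X$ and $\tau X$ are reached from $X_c$ and $\tau X_c$ by the same mutation sequence, they carry the same quiver, and by the definition of~$\tau$ the variable $\tau y$ sits at the same vertex of the seed reached from $\tau X$ by the same mutation sequence that produces $y$ from~$X$; re-basing your parallel induction at these two seeds then gives $y(X)=\tau y(\tau X)$ directly. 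Either way, an explicit step handling arbitrary~$X$ is required, and as written your proof does not contain one.
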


\begin{proof}
We prove this proposition in two parts: first in the case where the cluster seed $X$ corresponds to a Coxeter element $c$, and then for any arbitrary cluster~seed.

Let~$X_c$ be the cluster corresponding to a Coxeter element $c$, \ie the set of variables on the vertices of the weighted quiver~$\weightedQuiverc$ corresponding to~$c$. By definition of the rotation map on the set of cluster variables, the rotated cluster~$\tau X_c$ consists of the variables on the vertices of the quiver obtained from~$\weightedQuiverc$ by consecutively applying the mutations $\mu_{c_1}\rightarrow \dots \rightarrow \mu_{c_n}$. The resulting underlying quiver $\mathcal Q'$ after these mutations is exactly equal to~$\weightedQuiverc$. 
Moreover, every sequence of mutations giving rise to a variable~$y$, which starts at the cluster seed~$(\weightedQuiverc, X_c)$ can be viewed as a sequence of mutations giving rise to~$\tau y$ starting at the rotated seed cluster~$(\mathcal Q', \tau X_c)$.
Since the quivers~$\weightedQuiverc$ and~$\mathcal Q'$ coincide, the rational functions for $y$ and $\tau y$ in terms of the variables of $X_c$ and $\tau X_c$ respectively are exactly the same as desired.

In the general case where $X$ is an arbitrary cluster seed we proceed as follows. For any cluster variable $y$ and any clusters $X$ and $Y$, denote by $y(X)$ the rational function of $y$ in terms of the variables of the cluster $X$, and by $Y(X)$ the rational functions of the variables of $Y$ with respect to the variables of the cluster $X$. Then, using the fact $y(X)=y(Y)\circ Y(X)$ and the first part of this proof we obtain
$$y(X) = y(X_c) \circ X_c(X) = \tau y(\tau X_c) \circ \tau X_c(\tau X) = \tau y(\tau X)$$
as desired.
\end{proof}

As a direct consequence of this lemma we obtain the following key result.

\begin{proposition}
\label{coro:compatibility-denom}
Let~$c$ be a Coxeter element. For every position~$i$ in the word~$\Qc$ denote by~$x_i = \posToVarc(i)$ the associated cluster variable. 
Then, the values~$d(x_i,x_j)$ satisfy the following two properties:
\begin{eqnarray}
& d(x_i,x_j) = b_i, & \text{for all } i \in [n] \text{ and } \posToRootsc (j) = \sum b_i \alpha_i \in \Phi_{\ge-1},  \label{eq:comp-den1} \\
& d(x_i,x_j) = d(\tau x_i, \tau x_j), & \text{for all } i,j \in [m]. \label{eq:comp-den2}
\end{eqnarray}
\end{proposition}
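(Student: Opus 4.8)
The plan is to deduce both relations directly from Lemma~\ref{lem:var-rotation} together with the known properties of the $d$-vectors under the bijection~$\posToVarc$, which were established in Section~\ref{subsec:bijections}. Recall that $x_i = \posToVarc(i)$ and that the compatibility degree $d(x_i,x_j)$ is, by Definition~\ref{def:comp-var}, the $x_i$-component of the $d$-vector $\dvector(X,x_j)$ for any cluster $X$ containing $x_i$; this is well-defined by Lemma~\ref{lem:comp-denominator}.

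For the first relation~\eqref{eq:comp-den1}, I would specialize to the initial cluster seed $X_c = \posToVarc(I_0)$, where $I_0 \eqdef [n]$ is the $\sqc$-cluster consisting of the first $n$ positions of $\Qc = \cwoc$ (these correspond precisely to the letters of $\sqc$, and hence to the negative simple roots under $\posToRootsc$). By the fundamental property recalled in Section~\ref{subsec:bijections}, the $d$-vector of $\rootsToVarc(\beta)$ with respect to the seed $X_c$ equals the vector $(b_1,\dots,b_n)$ of coordinates of $\beta = \sum b_i\alpha_i$ in the basis $\Delta$ of simple roots. Now $x_i = \posToVarc(i) = \rootsToVarc(-\alpha_{c_i})$ for $i \in [n]$, so the variable $x_i$ is precisely the $i$-th initial variable of $X_c$; its $d$-vector component in the $x_i$-direction reads off the coefficient $b_i$ of $\beta = \posToRootsc(j)$. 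Hence $d(x_i,x_j)$ is exactly $b_i$, which is~\eqref{eq:comp-den1}. The only care needed here is to match the indexing of the quiver vertices with the positions $i\in[n]$ and the simple roots $\alpha_{c_i}$, but this is precisely the content of the bijection $\posToVarc$ restricted to $[n]$.

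The second relation~\eqref{eq:comp-den2} is the immediate payoff of Lemma~\ref{lem:var-rotation}. That lemma gives $\dvector(X,y) = \dvector(\tau X, \tau y)$ for any cluster $X$ and cluster variable $y$; in particular the individual coordinates of the two $d$-vectors agree after relabeling by $\tau$. Choosing any cluster $X$ containing $x_i$, we have that $\tau X$ is a cluster containing $\tau x_i$ (by part~(iii) of the rotation lemma, clusters are preserved by $\tau$), and the $x_i$-component of $\dvector(X,x_j)$ equals the $\tau x_i$-component of $\dvector(\tau X, \tau x_j)$. By Definition~\ref{def:comp-var} the former is $d(x_i,x_j)$ and the latter is $d(\tau x_i, \tau x_j)$, giving~\eqref{eq:comp-den2}. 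The main subtlety to handle is bookkeeping: one must verify that the coordinate indexed by $x_i$ in the basis $X$ is carried by $\tau$ to the coordinate indexed by $\tau x_i$ in the basis $\tau X$, which follows because Lemma~\ref{lem:var-rotation} asserts equality of the full rational expressions, so the entire monomial denominator is transported variable-by-variable through $\tau$.

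I expect the genuine obstacle to lie not in this proposition but in Lemma~\ref{lem:var-rotation} itself, which has already been proved; granting it, the present statement is essentially a translation of its two consequences into the language of $c$-compatibility, matching exactly the two defining properties~\eqref{compatibility-relation1} and~\eqref{compatibility-relation2} of Definition~\ref{def:comp-roots}. The conceptual point worth emphasizing is that~\eqref{eq:comp-den1} anchors the values $d(x_i,x_j)$ on the negative simple roots, while~\eqref{eq:comp-den2} propagates them along $\tau$-orbits; since the $\tauc$-orbits of the negative roots cover all of $\Phi_{\ge-1}$ (as noted in the remark following Definition~\ref{def:comp-roots}), these two relations together will pin down $d(\cdot,\cdot)$ uniquely, which is precisely what is needed to identify it with the $c$-compatibility degree in the proof of Theorem~\ref{thm:three-compatibilities-coincide}.
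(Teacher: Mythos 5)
Your proposal is correct and takes essentially the same route as the paper: relation~\eqref{eq:comp-den1} is obtained by specializing to the initial seed~$X_c$ given by the first~$n$ positions of~$\Qc$ and invoking the $d$-vector description of Section~\ref{subsec:bijections}, and relation~\eqref{eq:comp-den2} is read off directly from Lemma~\ref{lem:var-rotation}. The additional bookkeeping you spell out (that~$\tau$ carries the $x_i$-coordinate of~$\dvector(X,x_j)$ to the $\tau x_i$-coordinate of~$\dvector(\tau X,\tau x_j)$, using that~$\tau$ preserves clusters) is left implicit in the paper but is the same argument.
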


\begin{proof}
Let~$X_c$ be the set of cluster variables corresponding to the first~$n$ positions of the word~$\Qc$, or equivalently, the cluster associated to the weighted quiver~$\weightedQuiverc$ corresponding to the Coxeter element~$c$. 
As discussed in Section~\ref{subsec:bijections}, the $d$-vector of the variable~$x_j=\rootsToVarc (\posToRootsc (j) )$ in terms of this initial quiver~$\weightedQuiverc$ is given by the almost positive root~$\posToRootsc(j)$. More precisely, we have
$$\dvector(X_c,x_j) = (b_1,\dots ,b_n).$$
This implies relation~\eqref{eq:comp-den1} of the proposition. Relation~\eqref{eq:comp-den2} follows directly from Lemma~\ref{lem:var-rotation}.
\end{proof}


\subsection{The map~$\vcoeff[\cdot][\cdot][\sqc]$ satisfies relations~\eqref{compatibility-relation1} and~\eqref{compatibility-relation2}}
\label{sec:mainproofpart2}

In this section we show that the map~$\vcoeff[\cdot][\cdot][\sqc]$ induces a map on the set of almost positive roots which satisfy the properties~\eqref{compatibility-relation1} and~\eqref{compatibility-relation2} in the definition of the $c$-compatibility degree among almost positive roots (Definition~\ref{def:comp-roots}). Before stating this result in Proposition~\ref{coro:compatibility-coef} we need some preliminaries concerning the coefficients~$\vcoeff[i][j][\sqc]$. 

For this, we use an operation of jumping letters between the words~$\Qc$ defined in~\cite[Section 3.2]{CeballosLabbeStump}. We denote by~$\eta :S\rightarrow S$ the involution~$\eta(s)=\wo s \wo$ which conjugates a simple reflection by the longest element~$\wo$ of~$W$. Given a word~$\Q \eqdef (q_1, q_2, \cdots, q_r)$, we say that the word~$(q_2, \cdots, q_r, \eta(q_1))$ is the \defn{jumping word} of~$\Q$, or is obtained by \defn{jumping} the first letter in~$\Q$. In the following three lemmas, we consider a reduced expression~$\sqc \eqdef (s,c_2,\dots,c_n)$ of a Coxeter element~$c$, and the reduced expression~$\sqc' \eqdef (c_2,\dots,c_n,s)$ of the Coxeter element~$c' \eqdef scs$ obtained by deleting the letter~$s$ in~$\sqc$ and putting it at the end.

\begin{lemma}[{\cite[Proposition~4.3]{CeballosLabbeStump}}]
\label{lem:rotation1}
The jumping word of~$\Qc$ coincides with the word~$\Q_{\sqc'}$ up to commutations of consecutive commuting letters.
\end{lemma}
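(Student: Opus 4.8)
The plan is to reduce the statement to a manipulation of $\sqc$-sorting words. Write $\mathrm{sort}_{\sqc}(x)$ and $\mathrm{sort}_{\sqc'}(x)$ for the $\sqc$- and $\sqc'$-sorting words of an element $x$, i.e.\ the lexicographically first subwords of $\sqc^{\infty}$ and $(\sqc')^{\infty}$ that are reduced expressions for $x$, so that $\woc = \mathrm{sort}_{\sqc}(\wo)$ and ${\mathrm w}_\circ(\sqc') = \mathrm{sort}_{\sqc'}(\wo)$. The two words compared are $\Qc = (s,c_2,\dots,c_n)\cdot\woc$ and $\Q_{\sqc'} = (c_2,\dots,c_n,s)\cdot{\mathrm w}_\circ(\sqc')$. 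The starting observation is that $\sqc^{\infty} = s\cdot(\sqc')^{\infty}$, and that the leading letter $s$ is selected by the greedy procedure defining $\mathrm{sort}_{\sqc}(\wo)$, since $s$ is a left descent of $\wo$ (every simple reflection is a left descent of the longest element). Hence $\woc = s\cdot\mathrm{sort}_{\sqc'}(s\wo)$; in particular its first letter is $w_1 = s$ and $(w_2,\dots,w_N) = \mathrm{sort}_{\sqc'}(s\wo)$. Consequently the jumping word of $\Qc$ equals $(c_2,\dots,c_n,s,w_2,\dots,w_N,\eta(s))$, which shares the literal prefix $(c_2,\dots,c_n,s)$ with $\Q_{\sqc'}$. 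After deleting this common prefix, the lemma reduces to the single statement
$$\mathrm{sort}_{\sqc'}(s\wo)\cdot\eta(s) \equiv {\mathrm w}_\circ(\sqc') \pmod{\text{commutations}}.$$

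For the crux I would first record the length-additive factorization $\wo = (s\wo)\cdot\eta(s)$: here $\eta(s) = \wo s\wo$ is again a simple reflection, $(s\wo)\cdot(\wo s\wo) = \wo$, and $\ell(\wo) = \ell(s\wo)+1$, so appending $\eta(s)$ to any reduced word for $s\wo$ produces a reduced word for $\wo$. In particular $\mathrm{sort}_{\sqc'}(s\wo)\cdot\eta(s)$ is a reduced expression for $\wo$ of the correct length $N$, as is ${\mathrm w}_\circ(\sqc')$. To identify the two commutation classes, I would invoke the suffix description of \cite[Proposition~7.1]{CeballosLabbeStump}: since $s$ is the last letter of $\sqc'$, the word ${\mathrm w}_\circ(\sqc')$ is, up to commutations, of the form $U\cdot\eta(s)$. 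The tail letters then match, and $U$ is a reduced expression for $\wo\,\eta(s) = s\wo$; using the prefix-closure property of sorting words (any prefix of a sorting word is itself the sorting word of its product) one identifies $U$ with $\mathrm{sort}_{\sqc'}(s\wo)$ up to commutations, which yields the displayed equivalence and hence the lemma.

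The main obstacle is precisely this last identification, where one must pass between ``literal prefix'' and ``up to commutations.'' Prefix-closure is a statement about genuine prefixes of the lexicographically first subword, whereas the suffix result only places $\eta(s)$ at the end \emph{after} commuting consecutive commuting letters; one therefore has to argue that moving $\eta(s)$ to the end of ${\mathrm w}_\circ(\sqc')$ and deleting it leaves a word in the commutation class of $\mathrm{sort}_{\sqc'}(s\wo)$, rather than some unrelated reduced word for $s\wo$. I expect this bookkeeping --- controlling the commutation class throughout, for instance by comparing the heaps of the two sorting words together with the single generator $\eta(s)$ --- to be the only genuinely delicate point; the remaining ingredients are the length count and the greedy description of sorting words.
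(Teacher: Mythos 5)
First, note that the paper does not prove this lemma at all: it is quoted verbatim from \cite[Proposition~4.3]{CeballosLabbeStump}, so your attempt can only be judged on its own completeness. Your reductions are correct and well executed: since every simple reflection is a left descent of $\wo$, the greedy description of sorting words does give $\woc = s\cdot\mathrm{sort}_{\sqc'}(s\wo)$ (the tail of $\sqc^\infty$ after its first letter is exactly $(\sqc')^\infty$, and any reduced prefix completes to $\wo$ inside it); cancelling the literal common prefix $(c_2,\dots,c_n,s)$ is legitimate in the easy direction you use it; the factorization $\wo = (s\wo)\,\eta(s)$ is length-additive; and the prefix-closure property you state is true (greedy inclusion at running product $y$ is the condition $yt \le_R x$ in right weak order, and transitivity shows the greedy runs for $x$ and $w$ agree along a weak-order prefix). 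So you have correctly reduced the lemma to the single assertion $\mathrm{sort}_{\sqc'}(s\wo)\cdot\eta(s) \equiv \mathrm{w}_\circ(\sqc')$ up to commutations.

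But that assertion is essentially the whole content of the lemma, and your argument for it has a genuine gap, which you yourself flag: \cite[Proposition~7.1]{CeballosLabbeStump} only produces a word $U\cdot\eta(s)$ in the commutation class of $\mathrm{w}_\circ(\sqc')$, and prefix-closure applies to literal prefixes of a sorting word, not to prefixes of words in its commutation class; a priori $U$ is merely \emph{some} reduced word for $s\wo$. Already in type $A_3$ with $\sqc = (s_1,s_2,s_3)$ one has $\mathrm{w}_\circ(\sqc') = (s_2,s_3,s_1,s_2,s_3,s_1)$ while $\mathrm{sort}_{\sqc'}(s_1\wo)\cdot\eta(s_1) = (s_2,s_3,s_1,s_2,s_1,s_3)$, so a nontrivial commutation is unavoidable and the identification cannot be literal. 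To close the gap one needs something like the following two ingredients, neither of which appears in your write-up: (a) a synchronization analysis of the two greedy procedures, showing that the greedy run for $s\wo$ (inclusion condition: $yt$ reduced and the reflection $yty^{-1}\ne s$, since $\mathrm{inv}(s\wo)=T\smallsetminus\{s\}$) selects exactly the positions of $\mathrm{sort}_{\sqc'}(\wo)$ minus the unique position where the inversion $s$ is created --- after that position the running products differ by a left factor of $s$ and the inclusion conditions coincide; and (b) the observation that reflections along a reduced word are pairwise distinct, so the heap element of $\mathrm{w}_\circ(\sqc')$ carrying reflection $s$ is unique; Proposition~7.1 exhibits a maximal element with letter $\eta(s)$, whose reflection is $\wo\,\eta(s)\,\wo = s$, hence it \emph{is} the inserted letter from (a), it is heap-maximal, and commuting it to the end yields precisely $\mathrm{sort}_{\sqc'}(s\wo)\cdot\eta(s)$. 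Without (a) and (b) the final sentence of your proof is an expectation, not an argument. A further caveat: Proposition~7.1 sits in a later section of \cite{CeballosLabbeStump} than Proposition~4.3, so if its proof there uses the rotation machinery you are trying to establish, your route would be circular as a proof of the cited statement; the self-contained fix is to prove the maximality in (b) directly rather than import it.
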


Let~$\sigma$ denote the map from positions in~$\Qc$ to positions in~$\Q_{\sqc'}$ which jumps the first letter and reorders the letters (by commutations).

\begin{lemma}[{\cite[Proposition~3.9]{CeballosLabbeStump}}]
\label{lem:cluster-jumping}
A subset~$I\subset [m]$ of positions in the word~$\Qc$ is a $\sqc$-cluster if and only if~$\sigma(I)$ is a $\sqc'$-cluster in~$\Q_{\sqc'}$.
\end{lemma}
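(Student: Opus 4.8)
The plan is to reduce the statement to the combinatorial description of clusters in Theorem~\ref{thm:CLS_cluster_complexes} and then to the single-letter jump isolated in Lemma~\ref{lem:rotation1}. By Theorem~\ref{thm:CLS_cluster_complexes}, $I$ is a $\sqc$-cluster if and only if the subword of $\Qc$ indexed by $[m]\ssm I$ is a reduced expression for $\wo$, and $\sigma(I)$ is a $\sqc'$-cluster if and only if the subword of $\Q_{\sqc'}$ indexed by $[m]\ssm\sigma(I)$ is a reduced expression for $\wo$. Since $\sigma$ is a bijection on the $m$ positions, $|I| = |\sigma(I)|$; if this common size is not $n$, then both complements have length different from $N = \ell(\wo)$ and neither can be reduced for $\wo$, so the asserted equivalence holds vacuously. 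It therefore suffices to treat subsets $I$ of size $n$, for which both complements have length exactly $N$; for a subword of that length, being a reduced expression for $\wo$ is equivalent to having product equal to $\wo$. Thus the whole claim reduces to showing that the product of the complement letters equals $\wo$ in $\Qc$ if and only if it does in $\Q_{\sqc'}$.

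First I would eliminate the commutations. By Lemma~\ref{lem:rotation1}, $\Q_{\sqc'}$ is obtained from the jumping word $(q_2,\dots,q_m,\eta(q_1))$ of $\Qc$ by commuting consecutive commuting letters, and $\sigma$ is the induced relabelling of positions. A single commutation of two adjacent commuting letters does not change the product of the letters indexed by any tracked set of positions: the two swapped letters are either both selected, both omitted, or, if exactly one is selected, its position relative to the other selected letters is unchanged. Hence the product of any complement is invariant under the reordering step, and the entire content of the lemma is concentrated in the pure jump that removes $q_1 = s$ from the front of $\Qc$ and appends $\eta(s) = \wo s \wo$ at the back.

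The core is then a short computation using that $\wo$ is an involution. If $1 \in I$, the letter $s$ is never read and the appended position lies in $\sigma(I)$, so the two complement words are literally equal and have equal product. If $1\notin I$, write the complement product of $\Qc$ as $s\,u$, where $u$ is the product of the complement letters at the positions $\{2,\dots,m\}\ssm I$; the appended position now lies in the complement of $\sigma(I)$, whose product is accordingly $u\,\eta(s)$. Using $\wo^2 = 1$ and $\eta(s) = \wo s\wo$, both conditions $s\,u = \wo$ and $u\,\eta(s) = \wo$ are equivalent to $u = s\wo$ (for the latter, $u\,\eta(s) = \wo \Leftrightarrow u = \wo\eta(s) = \wo\wo s\wo = s\wo$). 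Hence the complement product equals $\wo$ in $\Qc$ if and only if it does in $\Q_{\sqc'}$, which is exactly the desired equivalence; in particular the two directions come for free from the same chain of equalities, so no separate converse argument is needed. I expect the only delicate point to be the bookkeeping of positions through the commutation step; once the invariance of complement products under commutations is recorded, the remaining computation is immediate.
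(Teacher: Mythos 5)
Your proof is correct and takes essentially the same route as the paper's: reduce via Theorem~\ref{thm:CLS_cluster_complexes} to the complement words, note they coincide when $1 \in I$ and are related by jumping the first letter when $1 \notin I$, and use that jumping preserves being a reduced expression for~$\wo$. You merely make explicit the details the paper leaves implicit, namely the invariance of complement products under commutations, the length count forcing $|I| = n$, and the computation $s\,u = \wo \Leftrightarrow u\,\eta(s) = \wo$.
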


We include a short proof of this lemma here for convenience of the reader.

\begin{proof}
Let~$P$ be the subword of~$\Qc$ with positions at the complement of~$I$ in~$[m]$, and let~$P'$ be the subword of~$\Q_{\sqc'}$ with positions at the complement of~$\sigma(I)$ in~$[m]$. 
Recall from Theorem~\ref{thm:CLS_cluster_complexes} that~$I$ is a $\sqc$-cluster in~$\Qc$ if and only if~$P$ is a reduced expression of~$\wo$. If~$1\in I$, then~$P$ and~$P'$ are the same. If~$1\notin I$, then~$P'$ is the jumping word of~$P$. In both cases~$P$ is a reduced expression of~$\wo$ if and only if~$P'$ is a reduced expression of~$\wo$. Therefore, $I$ is a $\sqc$-cluster in~$\Qc$ if and only if~$\sigma(I)$ is a $\sqc'$-cluster in~$\Q_{\sqc'}$.
\end{proof}

Observe now that we obtain again~$\sqc$ when we jump repeatedly all its letters. However, a position~$i$ in~$\Qc$ is rotated to the position~$\tausqc^{-1}(i)$ by these operations. This implies the following statement.

\begin{corollary}
\label{coro:rotation}
A subset~$I\subset [m]$ of positions in the word~$\Qc$ is a $\sqc$-cluster if and only if~$\tausqc(I)$ is a $\sqc$-cluster.
\end{corollary}

Using the jumping operation studied above, we now derive the following results concerning the coefficients~$\vcoeff[i][j][\sqc]$.

\begin{lemma}
\label{lem:compatibility-rotation}
If~$i' \eqdef \sigma(i)$ and $j' \eqdef \sigma(j)$ denote the positions in~$\Q_{\sqc'}$ corresponding to positions $i$ and $j$ in~$\Qc$ after jumping the letter~$s$, then
$$\vcoeff[i'][j'][\sqc'] = \vcoeff[i][j][\sqc].$$
\end{lemma}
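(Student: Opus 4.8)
The plan is to unwind both the $\sqc$- and the $\sqc'$-compatibility degrees through Definition~\ref{def:comp-pos} and to track how the root function, the decomposition coefficients, and the sign convention transform under the jumping map~$\sigma$. Fix a $\sqc$-cluster~$I$ containing~$i$; by Lemma~\ref{lem:indep1} the value~$\vcoeff[i][j][\sqc]$ does not depend on this choice, and by Lemma~\ref{lem:cluster-jumping} the image~$\hat I \eqdef \sigma(I)$ is a $\sqc'$-cluster containing~$i' = \sigma(i)$, so it may be used to compute~$\vcoeff[i'][j'][\sqc']$. Since root functions only involve products of reflections applied to the roots~$\alpha_{q_k}$, they are invariant under commutations of consecutive commuting letters; by Lemma~\ref{lem:rotation1} I may therefore replace~$\Q_{\sqc'}$ throughout by the jumping word~$(q_2,\dots,q_m,\eta(q_1))$, writing~$\hat k$ for the image of a position~$k$ (so $\hat k = k-1$ for $k \geq 2$ and $\hat 1 = m$).

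First I would establish the key transformation formula
$$\Root{\hat I}{\hat k} = \epsilon_k \, M \, \Root{I}{k} \qquad \text{for all } k \in [m],$$
where~$M$ is the identity if~$1 \in I$ and the reflection~$s$ if~$1 \notin I$, and where~$\epsilon_k = 1$ for~$k \geq 2$ while~$\epsilon_1 = -1$. For a position~$k \geq 2$ this is immediate from~$\Root{I}{k} = \wordprod{\sqc}{[k-1]\ssm I}(\alpha_{q_k})$: the positions preceding~$\hat k$ in the jumping word are exactly~$2,\dots,k-1$, so the defining product loses its leftmost factor~$s$ (when~$1 \notin I$) or keeps it (when~$1 \in I$). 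For the jumped position~$k=1$ I would argue directly, using that~$\Root{I}{1} = \alpha_s$, that the complement of a cluster multiplies to~$\wo$, and the identity~$\wo(\alpha_{\eta(s)}) = -\alpha_s$ to evaluate~$\Root{\hat I}{m}$ in both cases; this is precisely where the extra sign~$\epsilon_1 = -1$ enters.

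The crucial consequence is that the single linear map~$M$ is common to all positions, hence cancels upon decomposing in the basis~$\Roots{\hat I}$: applying~$\epsilon_j M$ to~$\Root{I}{j} = \sum_{i\in I}\rho_i(j)\Root{I}{i}$ and invoking uniqueness of the decomposition yields~$\hat\rho_{\hat i}(\hat j) = (\epsilon_j/\epsilon_i)\,\rho_i(j)$, where~$\hat\rho$ denotes the coefficients computed in the jumping word. It then remains to reconcile the sign conventions of Definition~\ref{def:comp-pos}, that is, to verify~$\operatorname{sgn}(\hat j,\hat i)\cdot(\epsilon_j/\epsilon_i) = \operatorname{sgn}(j,i)$, where~$\operatorname{sgn}(b,a)$ equals~$+1$ if~$b>a$ and~$-1$ if~$b\leq a$.

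The main obstacle, and the only genuinely delicate point, is this final sign bookkeeping, combined with the fact that sending position~$1$ to the end reverses its order relative to every other position. I would dispatch the identity by the four cases according to whether each of~$i$ and~$j$ equals~$1$ or is at least~$2$: when neither equals~$1$ the ratio~$\epsilon_j/\epsilon_i$ is~$1$ and the comparison is preserved; when exactly one of them equals~$1$ both the ratio and the comparison change sign and cancel; and when~$i=j=1$ both sides reduce to~$-1$. In every case the two sign changes conspire to give~$\vcoeff[i'][j'][\sqc'] = \vcoeff[i][j][\sqc]$. Finally, I would verify the commutation-invariance invoked in the reduction step at the level of the compatibility degree itself: swapping two consecutive commuting letters relabels their positions, but since commuting simple reflections fix one another's simple roots, this merely permutes the root configuration~$\Roots{I}$ and leaves all coefficients~$\vcoeff[\cdot][\cdot][\sqc]$ unchanged.
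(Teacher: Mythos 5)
Your proposal is correct and follows essentially the same route as the paper's proof: you track how the root function transforms under the jumping operation (your unified formula $\Root{\hat I}{\hat k} = \epsilon_k\, M\, \Root{I}{k}$ is exactly the paper's four-case formula, with the cases $1 \in I$ versus $1 \notin I$ and $k = 1$ versus $k \neq 1$), deduce the sign change $\rho_{i'}(j') = -\rho_i(j)$ precisely when $i=1$ or $j=1$, and cancel it against the order-reversal in the sign convention of Definition~\ref{def:comp-pos}. Your treatment is in fact slightly more explicit than the paper's at two points the paper leaves implicit --- the evaluation at the jumped position via $\wo(\alpha_{\eta(s)}) = -\alpha_s$ together with the fact that the complement of a cluster multiplies to $\wo$, and the invariance of the coefficients under commutations of consecutive commuting letters needed to identify the jumping word with $\Q_{\sqc'}$ --- but these are elaborations of the same argument, not a different one.
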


\begin{proof}
Let~$I$ be a~$\sqc$-cluster and~$k$ be a position in~$\Qc$. We denote by~$I' \eqdef \sigma(I)$ the $\sqc'$-cluster corresponding to~$I$ and by~$k' \eqdef \sigma(k)$ the position in~$\Q_{\sqc'}$ corresponding to~$k$ after jumping the letter~$s$. By the definition of the root function we obtain that
$$\Root{I'}{k'} = \begin{cases} -\Root{I}{k} & \text{if } k = 1 \in I, \\ \phantom{-}\Root{I}{k} & \text{if } k = 1 \notin I, \\ \phantom{-}\Root{I}{k} & \text{if } k \ne 1 \in I, \\ \phantom{.}s\Root{I}{k} & \text{if } k \ne 1 \notin I, \end{cases}$$
Applying this relation to a $\sqc$-cluster~$I$ containing~$i$ and a position~$j\neq i$, we derive that
$$\rho_{i'}(j') = \begin{cases} - \rho_i(j) & \text{if } i = 1 \text{ or } j = 1, \\ \phantom{-}\rho_i(j) & \text{otherwise,}\end{cases}$$
where~$\rho_i(j)$ denotes the $i$th coordinate of~$\Root{I}{j}$ in the linear basis~$\Roots{I}$ and~$\rho_{i'}(j')$ denotes the $i'$th coordinate of~$\Root{I'}{j'}$ in the linear basis~$\Roots{I'}$. This implies that $\vcoeff[i'][j'][\sqc'] = \vcoeff[i][j][\sqc]$ by definition of~$\vcoeff[\cdot][\cdot][\sqc]$.
\end{proof}

\begin{proposition}
\label{coro:compatibility-coef}
Let~$c$ be a Coxeter element and~$i$ and~$j$ be positions in the word~$\Qc$. Then, the coefficients $\vcoeff[i][j][\sqc]$ satisfy the following two properties:
\begin{eqnarray}
&\vcoeff[i][j][\sqc] = b_i, & \text{for all } i \in [n] \text{ and } \posToRootsc (j) = \sum b_i \alpha_i \in \Phi_{\ge-1}, \label{eq:comp-coef1} \\
& \vcoeff[i][j][\sqc] = \vcoeff[\tausqc i][\tausqc j][\sqc], & \text{for all } i,j \in [m]. \label{eq:comp-coef2}
\end{eqnarray}
\end{proposition}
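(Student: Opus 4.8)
The plan is to verify the two relations separately, mirroring the structure of Proposition~\ref{coro:compatibility-denom} but working entirely inside the subword complex.

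For relation~\eqref{eq:comp-coef1} I would compute the coefficients on the distinguished initial $\sqc$-cluster $I \eqdef [n] = \{1,\dots,n\}$, whose complement in $\Qc = \cwoc$ is exactly the reduced word $\woc$ for $\wo$, so that $I$ is indeed a $\sqc$-cluster by Theorem~\ref{thm:CLS_cluster_complexes}; by Lemma~\ref{lem:indep1} the coefficients $\vcoeff[i][j][\sqc]$ may be computed in any $\sqc$-cluster containing $i$, and since $i\in[n]=I$ this choice is legitimate. The key computation is that for this $I$ one has $\Root{I}{i} = \alpha_{q_i}$ for $i\le n$ (because $[i-1]\ssm I = \emptyset$), so that the root configuration $\Roots{I}$ is precisely the set of simple roots, and moreover $\Root{I}{j} = \posToRootsc(j)$ for $j>n$ while $\Root{I}{j} = -\posToRootsc(j) = \alpha_{q_j}$ for $j\le n$, directly from the definition of the root function and of the bijection $\posToRootsc$. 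Expanding $\Root{I}{j}$ in the basis $\Roots{I}$ then reads off the coordinate of $\pm\posToRootsc(j)$ along each simple root $\alpha_{q_i}$, and a short case analysis of the sign convention in Definition~\ref{def:comp-pos} (according to whether $j>i$ or $j\le i$, and treating the positions $j\le n$ carrying negative roots separately) yields $\vcoeff[i][j][\sqc] = b_i$, including the diagonal value $\vcoeff[i][i][\sqc] = -1$.

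For relation~\eqref{eq:comp-coef2} I would iterate Lemma~\ref{lem:compatibility-rotation}. That lemma shows that jumping the first letter of $\sqc$, which carries positions $i,j$ of $\Qc$ to positions $\sigma(i),\sigma(j)$ of $\Q_{\sqc'}$, preserves the coefficient: $\vcoeff[\sigma(i)][\sigma(j)][\sqc'] = \vcoeff[i][j][\sqc]$. Applying this successively to all $n$ letters of $\sqc$ returns, up to commutations, to the original word $\Qc$ (as observed just before Corollary~\ref{coro:rotation}), while the composite of the $n$ jumping maps sends a position $i$ to $\tausqc^{-1}(i)$. Chaining the $n$ instances of Lemma~\ref{lem:compatibility-rotation} therefore gives $\vcoeff[\tausqc^{-1}(i)][\tausqc^{-1}(j)][\sqc] = \vcoeff[i][j][\sqc]$ for all $i,j$, and replacing $i,j$ by $\tausqc(i),\tausqc(j)$ produces exactly~\eqref{eq:comp-coef2}.

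The computations in the first part are elementary once the initial cluster is identified, so I expect the main obstacle to lie in the bookkeeping of the second part: one must check that the $n$ successive jumps genuinely close up into the rotation $\tausqc$, rather than merely relabeling $\sqc$ into $\eta(\sqc)$, and that the hypotheses of Lemma~\ref{lem:compatibility-rotation} remain valid at each step as the first letter advances from $c_1$ to $c_2$ and so on. Both points are furnished by Lemma~\ref{lem:rotation1} together with the observation preceding Corollary~\ref{coro:rotation}, so the argument ultimately reduces to assembling these facts carefully while tracking the composition of the position maps.
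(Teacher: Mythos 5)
Your proposal is correct and takes essentially the same approach as the paper: relation~\eqref{eq:comp-coef1} is verified on the initial $\sqc$-cluster $I=[n]$, where the root configuration is the set of simple roots, $\Root{I}{j}=\posToRootsc(j)$ for $j>n$ and $\Root{I}{j}=\alpha_{q_j}$ for $j\le n$, with the sign rule of Definition~\ref{def:comp-pos} producing the diagonal value $-1$; and relation~\eqref{eq:comp-coef2} follows from $n$ applications of Lemma~\ref{lem:compatibility-rotation}, whose composite carries a position $i$ to $\tausqc^{-1}(i)$. Your explicit sign bookkeeping and the closing-up check for the $n$ jumps are merely more detailed versions of steps the paper states in one line.
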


\begin{proof}
Let~$I_\sqc=[n]$ be the $\sqc$-cluster given by the first~$n$ positions in the word~$\Qc$. Then
$$
\Root{I_\sqc}{j} = 
\begin{cases} 
\alpha_{c_j} & \text{if } 1\leq j \leq n, \\ 
\posToRootsc(j) & \text{if } n < j \leq m.
\end{cases}
$$ 
Therefore, by definition of the coefficients~$\vcoeff[i][j][\sqc]$ we have that the vector~$(\vcoeff[i][j][\sqc])_{i\in I_\sqc}$ is given by the almost positive root~$\posToRootsc(j)$. More precisely, 
$$(\vcoeff[i][j][\sqc])_{i\in I_\sqc} = (b_1, \dots , b_n).$$
This implies relation~\eqref{eq:comp-coef1} of the proposition. Relation~\eqref{eq:comp-coef2} follows from jumping repeatedly all the letters of~$\sqc$ in~$\Qc$. Thus, position~$i$ in~$\Qc$ is rotated to position~$\tausqc^{-1}(i)$ by these operations, and the result follows from~$n$ applications of Lemma~\ref{lem:compatibility-rotation}. 
\end{proof}


\subsection{Proof of Theorem~\ref{thm:three-compatibilities-coincide}}

By Definition~\ref{def:comp-roots}, Proposition~\ref{coro:compatibility-denom} and Proposition~\ref{coro:compatibility-coef}, the three maps~$d( \posToVarc(\cdot) , \posToVarc(\cdot) )$, $\cdeg[\posToRootsc(\cdot)] [\posToRootsc(\cdot)] [c]$ and $\vcoeff[\cdot][\cdot][\sqc]$
on the set of positions in the word~$\Qc$ satisfy the two properties~\eqref{eq:comp-coef1} and~\eqref{eq:comp-coef2} of Proposition~\ref{coro:compatibility-coef}. 
Since these properties uniquely determine a map on the positions in~$\Qc$ the result follows.


\subsection{Proof of Corollary~\ref{coro:positive}}
\label{sec:proof-coro-positive}

In this section we present two independent proofs of Corollary~\ref{coro:positive}. The first proof follows from the description of $d$-vectors in terms of $c$-compatibility degrees~$\cdeg[\alpha][\beta][c]$ between almost positive roots as presented in Corollaries~\ref{thm:d_vectors-compatibilty} and~\ref{thm:d_vectors-c_compatibilty}. The second proof is based on the description of $d$-vectors in terms of the coefficients~$\vcoeff[i][j][\sqc]$ obtained from the word~$\Qc$ as presented in Corollary~\ref{thm:d-vector}. Our motivation for including this second proof here is to extend several results of this paper to the family of ``root-independent subword complexes". This family of simplicial complexes, defined in~\cite{PilaudStump-brickPolytopes}, contains all cluster complexes of finite type. Using the brick polytope approach~\cite{PilaudSantos-brickPolytope, PilaudStump-brickPolytopes}, V.~Pilaud and C.~Stump constructed polytopal realizations of these simplicial complexes. Extending the results of this paper to root-independent subword complexes might lead to different polytopal realizations of these simplicial complexes.


\subsubsection{First proof}

Corollary~\ref{coro:positive} follows from the following known fact.

\begin{lemma}[\cite{FominZelevinsky-YSystems, MarshReinekeZelevinsky, Reading-CoxeterSortable}]
\label{lem:non-neg1}
The compatibility degree~$\cdeg[\alpha][\beta][c]$ is non-negative for any pair of almost positive roots~$\alpha \neq \beta$. Moreover,~$\cdeg[\alpha][\beta][c]=0$ if and only if~$\alpha$ and~$\beta$ are $c$-compatible, \ie if they belong to some $c$-cluster.
\end{lemma}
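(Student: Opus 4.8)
The plan is to prove the two assertions by moving freely between the three models identified in Theorem~\ref{thm:three-compatibilities-coincide}, using whichever description is most convenient for each implication. Non-negativity is cleanest straight from the axiomatic Definition~\ref{def:comp-roots}, while the equivalence ``$\cdeg[\alpha][\beta][c]=0 \Leftrightarrow \alpha,\beta$ are $c$-compatible'' I would split into its two implications: the forward one is immediate from the $d$-vector description, and the converse is the genuine obstacle.

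\emph{Non-negativity.} Fix $\alpha\neq\beta$ in $\Phi_{\ge-1}$. Since the $\tauc$-orbits of the negative simple roots cover $\Phi_{\ge-1}$ (as noted in the remark following Definition~\ref{def:comp-roots}), choose $k$ with $\tauc^{-k}(\alpha)=-\alpha_i$ and set $\gamma\eqdef\tauc^{-k}(\beta)=\sum_j b_j\alpha_j$. Relation~\eqref{compatibility-relation2} gives $\cdeg[\alpha][\beta][c]=\cdeg[-\alpha_i][\gamma][c]$, and relation~\eqref{compatibility-relation1} identifies this value with the coefficient $b_i$. If $\gamma\in\Phi^+$ then all $b_j\ge0$; if $\gamma=-\alpha_\ell\in-\Delta$ then $b_i=-\delta_{i\ell}$, and $b_i=-1$ would force $\ell=i$, i.e. $\tauc^{-k}(\beta)=-\alpha_i=\tauc^{-k}(\alpha)$ and hence $\alpha=\beta$, against our hypothesis. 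In every case $\cdeg[\alpha][\beta][c]=b_i\ge0$.

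\emph{$c$-compatible $\Rightarrow$ vanishing.} If $\alpha$ and $\beta$ are $c$-compatible there is a $c$-cluster $B=\{\beta_1,\dots,\beta_n\}$ with $\alpha=\beta_i$ and $\beta=\beta_j$, $i\neq j$. By Corollary~\ref{thm:d_vectors-c_compatibilty}, $\dvector_c(B,\beta_j)=\big(\cdeg[\beta_1][\beta_j][c],\dots,\cdeg[\beta_n][\beta_j][c]\big)$. But $\rootsToVarc(\beta_j)$ is one of the initial variables of the seed $\rootsToVarc(B)$, so its $d$-vector is $-e_j$ by definition; comparing $i$-th entries for $i\neq j$ yields $\cdeg[\alpha][\beta][c]=\cdeg[\beta_i][\beta_j][c]=0$.

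\emph{Vanishing $\Rightarrow$ $c$-compatible}, which I expect to be the crux. I would transport the statement to positions in $\Qc$ via Theorem~\ref{thm:three-compatibilities-coincide}: writing $i\eqdef\posToRootsc^{-1}(\alpha)$ and $j\eqdef\posToRootsc^{-1}(\beta)$, the hypothesis reads $\vcoeff[i][j][\sqc]=0$, i.e. $\rho_i(j)=0$ in the notation of Definition~\ref{def:comp-pos}, and the goal becomes that some $\sqc$-cluster contains both $i$ and $j$. Arguing by contraposition, suppose no $\sqc$-cluster contains both; fix any $\sqc$-cluster $I\ni i$, so that $j\notin I$, and recall from Lemma~\ref{lem:indep1} that $\rho_i(j)$ is independent of this choice. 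It then remains to show $\rho_i(j)\neq0$, i.e. that $\Root{I}{j}$ genuinely involves the basis vector $\Root{I}{i}$ rather than lying in the hyperplane spanned by $\{\Root{I}{\ell} : \ell\in I\ssm i\}$ for \emph{every} such $I$. The hard part is exactly ruling this out. My plan is to track $\rho_i(j)$ along the sequence of flips (not involving $i$) that would otherwise drag $j$ into a cluster containing $i$, using the update formula of Lemma~\ref{lem:flip_root_function} together with the sign properties of root configurations established in~\cite{PilaudStump-brickPolytopes}, so that a vanishing coefficient can occur only when such a common cluster already exists. Alternatively, the $\tauc$-invariance used above reduces the claim to the case $\alpha=-\alpha_i$, where it becomes the statement that $-\alpha_i$ is $c$-compatible with $\gamma$ if and only if the $\alpha_i$-coordinate of $\gamma$ vanishes — a fact one can read off from the geometric models of Section~\ref{sec:geom-interpretations} or from the representation-theoretic description of $d$-vectors.
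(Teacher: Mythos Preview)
The paper itself does not prove this lemma directly --- it is cited from the literature --- but immediately afterwards it proves the equivalent statement in the subword-complex language (the lemma on the coefficients~$\vcoeff[i][j][\sqc]$), which by Theorem~\ref{thm:three-compatibilities-coincide} is the same result. Your overall structure matches that proof closely: reduce by the rotation $\tauc$ (resp.\ $\tausqc$) to the case where the first argument is a negative simple root (resp.\ one of the first~$n$ positions), where the coefficient is simply a coordinate of an almost positive root. Your non-negativity argument and your ``compatible $\Rightarrow$ vanishing'' argument are correct; the latter uses Corollary~\ref{thm:d_vectors-c_compatibilty} where the paper just computes $\vcoeff[i][j][\sqc]$ directly in a $\sqc$-cluster containing both~$i$ and~$j$, but either way this implication is immediate.

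The gap is in ``vanishing $\Rightarrow$ compatible''. You correctly reduce to the statement that $-\alpha_i$ is $c$-compatible with $\gamma$ if and only if the $\alpha_i$-coordinate of $\gamma$ vanishes, but then you do not prove it. Your Plan~A (tracking $\rho_i(j)$ through flips via sign properties) is not carried out and would need real work; your Plan~B appeals to the geometric models of Section~\ref{sec:geom-interpretations}, but those cover only types~$A$, $B$, $C$, $D$, so this is not a proof in general finite type. The paper closes exactly this gap by invoking \cite[Theorem~5.1]{CeballosLabbeStump}: for $i\in[n]$, the positions $i$ and $j$ lie in a common $\sqc$-cluster if and only if $\posToRootsc(j)$ belongs to the parabolic almost-positive root system $(\Phi_{\langle c_i\rangle})_{\ge-1}$, i.e.\ if and only if its $\alpha_{c_i}$-coordinate vanishes. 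That single citation is the missing ingredient in your argument.
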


Let~$B=\{\beta_1,\dots,\beta_n\} \subset \Phi_{\geq -1}$ be an initial $c$-cluster seed and~$\beta\in \Phi_{\geq-1}$ be an almost positive root which is not in~$B$. Then, by Lemma~\ref{lem:non-neg1} there is at least one~$i\in [n]$ such that~$\cdeg[\beta_i][\beta][c]>0$, otherwise~$\beta$ would be $c$-compatible to with all~$\beta_i$ which is a contradiction. 
Corollary~\ref{coro:positive} thus follows from Corollary~\ref{thm:d_vectors-c_compatibilty} and Lemma~\ref{lem:non-neg1}.


\subsubsection{Second proof}
Corollary~\ref{coro:positive} follows from the next statement.

\begin{lemma}
The coefficient $\vcoeff[i][j][\sqc]$ is non-negative for any pair of positions $i\neq j$ in the word~$\Qc$. Moreover, $\vcoeff[i][j][\sqc] = 0$ if and only if $i$ and $j$ are compatible, \ie if they belong to some $\sqc$-cluster.
\end{lemma}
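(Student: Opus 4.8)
The plan is to reduce the statement about the coefficients $\vcoeff[i][j][\sqc]$ entirely to the already-established statement about $c$-compatibility degrees among almost positive roots (Lemma~\ref{lem:non-neg1}), via the main result Theorem~\ref{thm:three-compatibilities-coincide}. The key observation is that we do not need to re-derive non-negativity from the linear-algebraic definition; we can transport the known result through the bijection $\posToRootsc$. Concretely, fix positions $i \neq j$ in $\Qc$, and set $\alpha \eqdef \posToRootsc(i)$ and $\beta \eqdef \posToRootsc(j)$. Since $\posToRootsc$ is a bijection, $i \neq j$ implies $\alpha \neq \beta$.

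First I would invoke Theorem~\ref{thm:three-compatibilities-coincide}, which asserts the equality
\[
\vcoeff[i][j][\sqc] = \cdeg[\posToRootsc(i)][\posToRootsc(j)][c] = \cdeg[\alpha][\beta][c]
\]
for every pair of positions in $\Qc$. Combined with Lemma~\ref{lem:non-neg1}, which states that $\cdeg[\alpha][\beta][c] \ge 0$ for any pair of distinct almost positive roots, this immediately gives $\vcoeff[i][j][\sqc] \ge 0$, establishing the first assertion.

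For the second assertion I would again use the equality from Theorem~\ref{thm:three-compatibilities-coincide} together with the ``moreover'' part of Lemma~\ref{lem:non-neg1}. That lemma says $\cdeg[\alpha][\beta][c] = 0$ if and only if $\alpha$ and $\beta$ are $c$-compatible, meaning they lie in a common $c$-cluster. Translating this through the bijections of Section~\ref{subsec:bijections}: the positions $i$ and $j$ belong to a common $\sqc$-cluster $I$ in $\Qc$ precisely when $\posToRootsc(i) = \alpha$ and $\posToRootsc(j) = \beta$ belong to the corresponding $c$-cluster $\posToRootsc(I)$ of almost positive roots, since $\posToRootsc$ carries $\sqc$-clusters to $c$-clusters by definition. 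Hence $\vcoeff[i][j][\sqc] = 0$ if and only if $i$ and $j$ are compatible, as claimed.

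The only point requiring slight care is the bookkeeping of the compatibility/$c$-compatibility notions across the three settings, so that ``$i$ and $j$ lie in a common $\sqc$-cluster'' is correctly identified with ``$\alpha$ and $\beta$ lie in a common $c$-cluster''; this is exactly what the definition of $\sqc$-clusters via $\posToVarc = \rootsToVarc \circ \posToRootsc$ in Section~\ref{subsec:bijections} provides, so no genuine obstacle arises. I expect the main (and indeed only) substantive input to be Theorem~\ref{thm:three-compatibilities-coincide}; once that identification is in hand, the lemma is a direct transport of Lemma~\ref{lem:non-neg1} and requires no new linear-algebraic computation with the coefficients $\rho_i(j)$.
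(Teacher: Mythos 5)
Your proof is correct, but it takes a genuinely different route from the paper's. You transport the statement through Theorem~\ref{thm:three-compatibilities-coincide} and then invoke Lemma~\ref{lem:non-neg1}, the known non-negativity and vanishing criterion for $c$-compatibility degrees cited from~\cite{FominZelevinsky-YSystems, MarshReinekeZelevinsky, Reading-CoxeterSortable}; the bookkeeping you check (that $i,j$ lie in a common $\sqc$-cluster iff $\posToRootsc(i),\posToRootsc(j)$ lie in a common $c$-cluster) is indeed immediate from $\posToVarc = \rootsToVarc \circ \posToRootsc$, so there is no gap. The paper instead argues entirely inside the subword complex: using the rotation invariance $\vcoeff[i][j][\sqc] = \vcoeff[\tausqc i][\tausqc j][\sqc]$ (Proposition~\ref{coro:compatibility-coef}) and the fact that $\tausqc$ preserves $\sqc$-clusters (Corollary~\ref{coro:rotation}), it reduces to the case $i \in [n]$; there the root configuration of the initial prefix is the basis of simple roots, so $\vcoeff[i][j][\sqc]$ is just the coefficient of $\alpha_{c_i}$ in the almost positive root $\posToRootsc(j)$, hence non-negative, and it vanishes exactly when $\posToRootsc(j) \in (\Phi_{\langle c_i \rangle})_{\geq -1}$, which by \cite[Theorem~5.1]{CeballosLabbeStump} characterizes compatibility of $i$ and $j$. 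The difference matters for the paper's architecture: this lemma is the engine of the \emph{second, independent} proof of Corollary~\ref{coro:positive}, while the first proof already rests on Lemma~\ref{lem:non-neg1}; by routing through Theorem~\ref{thm:three-compatibilities-coincide} and Lemma~\ref{lem:non-neg1} you collapse the second proof into the first, and you lose precisely what the authors state as their motivation --- an argument using only linear algebra and the root function, with a view toward extending these results to root-independent subword complexes, where almost positive roots and the classical compatibility degrees are not available. Your version buys brevity and avoids any new computation; the paper's version buys logical independence from the representation-theoretic input and potential generality.
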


\begin{proof}
The non-negativity is clear if~$i$ is one of the first~$n$ letters. Indeed, computing in the $\sqc$-cluster~$I$ given by the initial prefix~$\sqc$ of the word~$\Qc$, the root configuration~$\Roots{I}$ is the linear basis of simple roots, and the coefficients~$\vcoeff[i][j][\sqc]$ are the coefficients of~$\posToRootsc(j)$, which is an almost positive root. The non-negativity for an arbitrary position~$i$ thus follows from Corollary~\ref{coro:compatibility-coef} since the orbit of the initial prefix~$\sqc$ under the rotation~$\tausqc$ cover all positions in~$\Qc$. 

For the second part of the lemma observe that if~$i$ and~$j$ belong to some $\sqc$-cluster~$I$ then the coefficient~$\vcoeff[i][j][\sqc]$ computed in terms of this cluster is clearly equal to zero.  
Moreover, by Proposition~\ref{coro:compatibility-coef} the coefficient~$\vcoeff[i][j][\sqc]=0$ if and only if~$\vcoeff[\tausqc i][\tausqc j][\sqc]=0$, and by Corollary~\ref{coro:rotation} $i$ and~$j$ belong to some $\sqc$-cluster if and only if~$\tausqc i$ and~$\tausqc j$ belong to some $\sqc$-cluster. Therefore, it is enough to prove the result in the case when~$i\in[n]$ belongs to the first~$n$ positions in~$\Qc$. The result in this case can be deduced from~\cite[Theorem~5.1]{CeballosLabbeStump}. This theorem states that if $i\in [n]$, then $i$ and~$j$ belong to some $\sqc$-cluster if and only if~$\posToRootsc(j)\in (\Phi_{\langle c_i \rangle})_{\geq -1}$ is an almost positive root of the parabolic root system that does not contain the root~$\alpha_{c_i}$. Since for~$i\in[n]$ the coefficient~$\vcoeff[i][j][\sqc]$ is the coefficient of the root~$\alpha_{c_i}$ in the almost positive root~$\posToRootsc(j)$, the result immediately follows.
\end{proof}


\section{Geometric interpretations in types~$A$, $B$, $C$ and~$D$}
\label{sec:geom-interpretations}

In this section, we present geometric interpretations for denominator vectors in the classical types $A$, $B$, $C$, and $D$, with respect to any initial cluster seed, acyclic or not. 
More details on the geometric models for cluster algebras of classical types can be found in~\cite[Section~3.5]{FominZelevinsky-YSystems} and~\cite[Section~12]{FominZelevinsky-ClusterAlgebrasII}. 
In type $D$ we prefer to use an interpretation based on pseudotriangulations, which slightly differs from that of S.~Fomin and A.~Zelevinsky, and which we will describe in detail in a forthcoming paper. 

\begin{figure}[h]
	\centerline{\includegraphics[scale=.77]{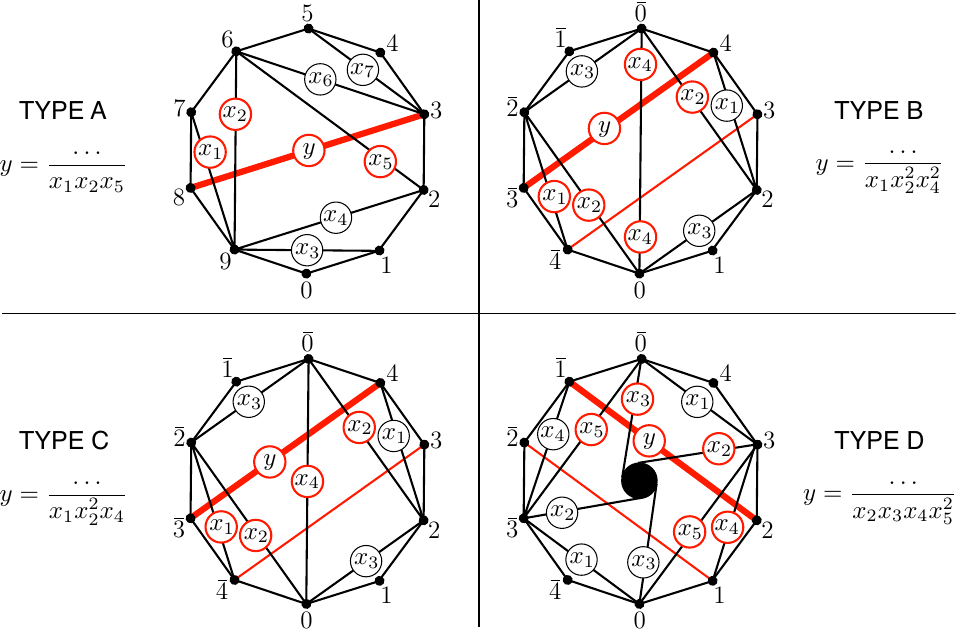}}
	\caption{Illustration of the geometric interpretations of denominator vectors in cluster algebras of classical types~$A$, $B$, $C$ and~$D$.}
	\label{fig:classicalTypes}
\end{figure}

As illustrated in \fref{fig:classicalTypes}, we can associate to each classical type a geometric configuration. Under this association, cluster variables correspond to \defn{diagonals} (or centrally symmetric pairs of diagonals) in the geometric picture, while clusters correspond to \defn{geometric clusters}: triangulations in type~$A$, centrally symmetric triangulations in types~$B$ and~$C$, and centrally symmetric pseudotriangulations in type~$D$ (\ie maximal crossing-free sets of centrally symmetric pairs of \textit{chords}). Moreover, cluster mutations correspond to \defn{geometric flips}, and the exchange relations on cluster variables can also be expressed geometrically.
Compatibility degrees and denominator vectors can be interpreted geometrically as follows.
Denote by~$\diagToVar$ the bijection from (c.s.~pairs of) diagonals to cluster variables. 
Given any two (c.s.~pairs of) diagonals~$\theta, \delta$, the compatibility degree~$d(\diagToVar(\theta),\diagToVar(\delta))$ between the corresponding cluster variables is given by the \defn{crossing number} $\cross[\theta][\delta]$ of the diagonals~$\theta$ and~$\delta$. By definition $\cross[\delta][\delta]=-1$, and if~$\theta \ne \delta$ then

\begin{itemize}
\item In type~$A$, $\cross[\theta][\delta]$ is equal to~$1$ if the diagonals~$\theta \ne \delta$ cross, and to~$0$ otherwise.
\item In type~$B$, we represent long diagonals (\ie diameters of the polygon) by doubled long diagonals. If~$\delta$ is not a long diagonal then~$\cross[\theta][\delta]$ is the number of times that a representative diagonal of the pair~$\delta$ crosses the pair~$\theta$. If~$\delta$ is a long diagonal then~$\cross[\theta][\delta]$ is~$1$ if~$\theta$ and~$\delta$ cross, and~$0$ otherwise. 
\item In type~$C$, the long diagonals remain as single long diagonals. The crossing number~$\cross[\theta][\delta]$ is the number of times that a representative diagonal of the pair~$\delta$ crosses~$\theta$.
\item In type~$D$, $\cross[\theta][\delta]$ is equal to the number of times that a representative diagonal of the pair~$\delta$ crosses the chords of~$\theta$.
\end{itemize}

Given any (centrally symmetric) seed triangulation~$T \eqdef \{ \theta_1, \dots ,\theta_n\}$ and any diagonal~$\delta$ (or c.s. pair of diagonals~$\delta$), the $d$-vector of the cluster variable~$\diagToVar(\delta)$ with respect to the initial cluster seed~$\diagToVar(T)$ is the \defn{crossing vector} 
\[
\dvector(T,\delta) \eqdef (\cross[\theta_1][\delta], \dots, \cross[\theta_n][\delta])
\] 
of~$\delta$ with respect to~$T$.


\section*{acknowledgements}
\label{sec:ack}

The first author would like to thank Sergey Fomin, Bernhard Keller, Andrei Zelevinsky, and specially Salvatore Stella for fruitful discussions at the MSRI workshop \emph{Cluster Algebras in Combinatorics, Algebra and Geometry} in 2012. He also thanks the MSRI for providing a wonderful environment during the workshop.
The main ideas used in this paper originated while we were trying to find general families of polytopal realizations of generalized associahedra given any initial cluster seed. We thank Paco Santos and Christian Stump for many discussions on these~topics.     

\bibliographystyle{alpha}
\bibliography{CeballosPilaud_dvectors}
\label{sec:biblio}


\end{document}